\newcommand{\myauthor}[3]{
\noindent
\begin{minipage}[t]{.45\textwidth}
	\begin{flushright}
		\textsc{#1} \\
		{\footnotesize\texttt{#2}}
	\end{flushright} 
\end{minipage}
\qquad
\begin{minipage}[t]{.45\textwidth}
#3
\end{minipage}
}
\mathchardef\hy="2D 
\patchcmd{\section}{\normalfont}{\normalfont\large}{}{}
\renewcommand{\AA}{\mathcal{A}}
\DeclareMathOperator{\cone}{cone}
\newcommand{\Ad}{\operatorname{Ad}}
\setlist[enumerate]{label= (\arabic*)}
\DeclareMathOperator\Def{Def}
\newenvironment{tenumerate}{
 \begin{enumerate}
  \setlength{\itemsep}{0pt}
  \setlength{\parskip}{0pt}
}{\end{enumerate}}
\newenvironment{titemize}{
\begin{itemize}
  \setlength{\itemsep}{0pt}
  \setlength{\parskip}{0pt}
}{\end{itemize}}
\definecolor{trinityblue}{rgb}{0.05, 0.45,0.75}
\definecolor{newcol}{rgb}{0,0,0}
\definecolor{deepblue}{rgb}{0.05, 0.45,0.75}
\DeclareTextFontCommand{\new}{\color{black}\em}
\DeclareRobustCommand{\[}{\begin{equation}}%
\DeclareRobustCommand{\]}{\end{equation}}%
\newtheoremstyle{mytheorem}
  {\topsep}   
  {\topsep}   
  {\itshape}  
  {0pt}       
  {\bfseries\color{newcol}} 
  {\color{newcol}}         
  {5pt plus 1pt minus 1pt} 
  {}          
\theoremstyle{mytheorem}
\newtheorem{theorem}{Theorem}[section]
\newtheorem{prop}[theorem]{Proposition}
\newtheorem{corollary}[theorem]{Corollary}
\newtheorem{lemma}[theorem]{Lemma}
\newtheorem*{conj*}{Conjecture}
 \newtheoremstyle{introthm}
  {\topsep}   
  {\topsep}   
  {\itshape}  
  {0pt}       
  {\bfseries\color{newcol}} 
  {\color{newcol}{.}}         
  {5pt plus 1pt minus 1pt} 
  {}          
\theoremstyle{introthm}
\newtheorem{introthm}{Theorem}
\newcommand{\ind}{\operatorname{Ind}_\PP}
\newcommand{\cof}{\rightarrowtail}
\newcommand{\weq}{\overset{\sim}{\longrightarrow}}
\newtheoremstyle{mydefinition}
  {\topsep}   
  {\topsep}   
  {}  
  {0pt}       
  {\bfseries\color{newcol}} 
  {\color{newcol}}         
  {5pt plus 1pt minus 1pt} 
  {}          
\theoremstyle{mydefinition}
\newtheorem{definition}[theorem]{Definition}
\newtheorem{rmk}[theorem]{Remark}
\newtheoremstyle{mydefinition2}
  {\topsep}   
  {\topsep}   
  {}  
  {0pt}       
  {\bfseries\color{newcol}} 
  {\color{newcol}{.}}         
  {5pt plus 1pt minus 1pt} 
  {}          
\theoremstyle{mydefinition2}
\newtheorem*{definition*}{Definition}
\newtheorem*{remark*}{Remark}
\newtheorem*{obs*}{Observation}
\newtheorem*{example*}{Example}
\theoremstyle{plain} 
\newcommand{\thistheoremname}{}
\newtheorem{genericthm}[theorem]{\thistheoremname}
\newcommand{\imor}{\interleave\kern-.45em\longrightarrow}
\newcommand{\Der}{\operatorname{Der}}
\newcommand{\HH}{\mathrm{HH}}
\newcommand{\AQ}{\mathscr{H}}
\renewcommand{\H}{\mathrm{H}}
\newcommand{\FF}{\mathcal F}
\definecolor{sqsqsq}{rgb}{0.13,0.13,0.13}
\definecolor{aqaqaq}{rgb}{0.63,0.63,0.63}
\newcommand\id{\mathrm{id}}
\newcommand{\SMod}{{}_\Sigma\mathsf{dgMod}}
\renewcommand{\tt}{\otimes}
\newcommand\cls[1]{\llbracket#1\rrbracket}
\newcommand{\NN}{\mathbb N}
\newcommand{\kk}{\Bbbk}
\newcommand{\Aut}{\operatorname{Aut}}
\newcommand{\Tor}{\operatorname{Tor}}
\newcommand{\End}{\operatorname{End}}
\newcommand{\Cog}{\mathsf{Cog}}
\newcommand{\Alg}{\mathsf{Alg}}
\newcommand{\Cxs}{\mathsf{Ch}_\kk}
\newcommand{\Cell}{\operatorname{Cell}}
\newcommand{\Sing}{\operatorname{Sing}}
\newcommand{\Sull}{A_{\mathrm{PL}}}
\newcommand{\Q}{{\mathbb{Q}}}
\newcommand{\PP}{{\mathcal{P}}}
\definecolor{newterm-color}{RGB}{0, 0, 0}
\theoremstyle{mytheorem}
\newtheorem*{theorem*}{Theorem}
\theoremstyle{plain} 
\renewenvironment{abstract}{%
\small\begin{center}
\begin{minipage}{.9\textwidth}
\small
}
{\par\noindent\end{minipage}\end{center}\vspace{3 em}}
\renewcommand\@maketitle{%
\hfill
\begin{center}\begin{minipage}{0.9 	\textwidth}
\centering
\vskip 6em
\let\footnote\thanks 
{\LARGE \@title \par }
\vspace{1 em}
\vskip 1 em
{\large \@author \par}
\vspace{3.5 em}

\end{minipage}\end{center}
\par
}
\title{\vspace{- 1.25 in}{\textbf{A spectral sequence for tangent cohomology of algebras over algebraic operads}}}
\author{\textsc{Jos\'e M. Moreno-Fern\'andez \\
	Pedro Tamaroff}}
\date{}
\begin{document}
\maketitle

\begin{abstract}
\small{
Operadic tangent cohomology generalizes the existing 
cohomology theories of Chevalley--Eilenberg, Hochschild,  
and Harrison to address
the deformation theory of general types of algebras
through gadgets known as deformation
complexes. The cohomology of these is in general
very non-trivial to compute, and in this paper we complement
the existing computational techniques by producing 
a spectral sequence that converges to 
the operadic cohomology of a fixed algebra.
Our main technical tool is that of filtrations
arising
from towers of cofibrations of algebras,
which play the same role cell attaching maps and
skeletal filtrations do for topological spaces. 

As an application, we consider the rational Adams--Hilton construction
on topological spaces, where our spectral sequence gives 
rise to a seemingly new and completely algebraic description 
of the Serre spectral sequence, which we also show is multiplicative
and converges to the Chas--Sullivan loop product. We also
consider relative Sullivan--de Rham models of a fibration $p$, where our 
spectral sequence converges to the rational homotopy groups of the identity component of the space of self-fiber-homotopy equivalences
of $p$.}

\bigskip

\small
\textbf{Keywords:} tangent cohomology $\cdot$ algebraic operads $\cdot $ rational homotopy theory
$\cdot$ spectral sequences.

\medskip

\textbf{MSC 2020:} 18M70; 13D03, 18G40, 55P62

\end{abstract}
\thispagestyle{empty}
\vspace{-2 em}
 \section{Introduction}\label{sec:intro}

\definecolor{bostonred}{rgb}{0.8,0,0}

Algebraic operads are an effective gadget to study
different types of algebras through a common
language. In particular, they provide us with tools
to study the deformation theory of such algebras.
For example, the operads controlling Lie, associative
and commutative algebras ---collectively known as the
three graces of J.-L. Loday--- along with the operadic
formalism, recover for us swiftly the usual (co)homology
theories of Chevalley--Eilenberg~\cite{Chevalley1948}, Hochschild~\cite{Hochschild} and Harrison~\cite{Harrison},
shedding light on the relation between these three. We point the reader to~\cite{Griffin} for an interesting
example of this.

More generally, 
for an operad $\PP$ and a $\PP$-algebra $A$, there is
a dg Lie algebra $\Def_\PP(\id_A)$, the 
deformation complex of $A$ (also known as the tangent Lie algebra of $A$), that codifies all deformation
problems over $A$, in the spirit
of~\cite{dotsenko2018twisting,DoubekMarklDeformation,FoxDeformation,calaque2019moduli}. In particular,
given a deformation problem, classes in the
cohomology groups $\AQ^*(A)$ of $\Def_\PP(\id_A)$ ---which are now known
as the Andr\'e--Quillen or tangent cohomology
groups of $A$--- allow us, among
other things, to determine obstructions to the existence of solutions
of such deformation problems. In this paper, we construct
a spectral sequence whose input is, in a precise sense,
a ``cellular decomposition of $A$'', that converges
to the cohomology groups $\AQ^*(A)$. 

There is a rich interplay~\cite{Hinich1997,vallette2014homotopy} 
between the homological algebra 
that arises when studying such deformation complexes
and the homotopical algebra of D. Quillen~\cite{Quillen} 
and, in particular, of it with the study of the 
homotopy category of some type of algebras.

Our main interest lies in 
towers of cofibrations of $\PP$-algebras for a
fixed operad $\PP$. 
For simplicity, we assume
$\PP$ is non-dg, although most of what we do can
be extended without too much effort for dg operads.
We focus on the computation of the tangent
cohomology of $\PP$-algebras once such a tower of cofibrations has been fixed. 

The relation between tangent cohomology and
(co)derivations of types of algebras has appeared many 
times in the literature, starting with the pioneering
work of M. Schlessinger and J. Stasheff in~\cite{Intrinsic,SchStash}. 
A definition for algebras over operads then naturally followed; 
we point the reader to~\cite{Milles} for an account of this development.

\textit{A remark on terminology.}
The term `Andr\'e--Quillen cohomology' seems to have been first used
in this generality by Goerss--Hopkins~\cite{Goerss2000}, while 
V. Hinich~\cite{Hinich1997} prefers the term `(absolute) cohomology'. 
In order to avoid any confusion with the classical Andr\'e--Quillen
cohomology for commutative algebras over fields of positive
characteristic, whose definition also happens to coincide with that of 
Quillen (co)homology, we will use the more neutral term
`tangent cohomology', as in~\cite{SchStash}. 

\begin{definition*}
Let $f:B_0\longrightarrow B$ be a morphism of $\PP$-algebras, 
and $M$ a $B$-module. The \new{tangent cohomology of $B$ relative
to $B_0$ with values in $M$} is the cohomology of the cochain complex
$\Def_\PP(f,M) := \Der_{Q_0}(Q,M)$ where $Q_0\longrightarrow Q$
is a cofibrant replacement of $f:B_0\longrightarrow B$, and we
write it $\AQ^*_{B_0}(B,M)$.
\end{definition*}

As we will explain, there is a functor from triples
of algebras to left-exact sequences ---given by the
usual Jacobi--Zariski sequence in geometry~\cite{Iyengar}*{Section 2.4}--- which yields a short exact sequence as 
long as the second map is a cofibration.
This interplay between derivations and cofibrations 
is at the very heart of the construction of the
spectral sequence appearing in
our main statement,~Theorem~\ref{thm:SucesionEspectral}.
There, we show that if
one can exhibit $Q_0\longrightarrow Q$ as a colimit of
a tower cofibrations
$
\mathcal T: Q_0 
	\cof Q_1  
		\cof \cdots 
			\cof Q_s 
				\cof Q_{s+1} 
					\cof \cdots 
						\cof \varinjlim_s Q_s = Q,
$
then one obtains a corresponding spectral sequence in terms of the tangent
cohomology of the stages of $\mathcal T$, where we allow coefficients to
take values in a $B$-module, and not necessarily $B$ itself. 


\begin{introthm} Let $Q$ be the colimit of a tower $\mathcal T$ of 
cofibrations of $\;\PP$-algebras.  
There is a functorial right half-plane spectral sequence with first page 
\[ 
	E^{s,t}_1 = H^{s+t} 	(\Der_{Q_s}(Q_{s+1},-))	\;	
			\xRightarrow{\phantom{m}s\phantom{m}} \; 
							H^{s+t}(\Der_{Q_0}(Q,-)).\] 
\end{introthm}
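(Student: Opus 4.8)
The plan is to exhibit the claimed spectral sequence as that of a complete and Hausdorff decreasing filtration on the deformation complex $\Der_{Q_0}(Q,-)$ induced by the tower $\mathcal T$, and then to deduce convergence from Boardman's conditional convergence criterion. First I would use that $Q=\varinjlim_s Q_s$ and that, for a fixed coefficient module, the derivation complex sends this colimit presentation of $Q$ to a limit of complexes, giving a canonical identification
\[
	\Der_{Q_0}(Q,M)\;=\;\varprojlim_s \Der_{Q_0}(Q_s,M),
\]
together with a tower of restriction maps $\Der_{Q_0}(Q_{s+1},M)\to \Der_{Q_0}(Q_s,M)$. Applying the Jacobi--Zariski functor from triples to short exact sequences to each triple $Q_0\cof Q_s\cof Q_{s+1}$ --- which is legitimate precisely because $Q_s\cof Q_{s+1}$ is a cofibration in the cofibration category of Theorem~\ref{thm:CofibrationStructure} --- produces short exact sequences of cochain complexes
\[
	0\longrightarrow \Der_{Q_s}(Q_{s+1},M)\longrightarrow \Der_{Q_0}(Q_{s+1},M)\longrightarrow \Der_{Q_0}(Q_s,M)\longrightarrow 0 .
\]
In particular the tower above consists of degreewise surjections whose kernels are the relative complexes $\Der_{Q_s}(Q_{s+1},M)$.

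Second, I would run the spectral sequence of the filtered complex $C:=\Der_{Q_0}(Q,M)$ equipped with the decreasing filtration $F^s C:=\Der_{Q_s}(Q,M)=\ker\big(C\to \Der_{Q_0}(Q_s,M)\big)$ for $s\geq 0$. Because the tower is made of surjections it is Mittag--Leffler, so the projection from the limit is onto and $C/F^sC\cong \Der_{Q_0}(Q_s,M)$; this yields completeness $C\cong\varprojlim_s C/F^sC$, while $\bigcap_s F^sC=\ker(C\to\varprojlim_s \Der_{Q_0}(Q_s,M))=0$ gives Hausdorffness. The short exact sequences above then identify the associated graded, $\operatorname{gr}^s C=F^sC/F^{s+1}C\cong\ker\big(\Der_{Q_0}(Q_{s+1},M)\to \Der_{Q_0}(Q_s,M)\big)=\Der_{Q_s}(Q_{s+1},M)$, so that $E_0^{s,t}$ is the degree $s+t$ part of $\Der_{Q_s}(Q_{s+1},-)$ and
\[
	E_1^{s,t}=H^{s+t}\big(\Der_{Q_s}(Q_{s+1},-)\big),
\]
with differentials $d_r$ of bidegree $(r,1-r)$. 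Since $F^0C=C$ and $\operatorname{gr}^sC=0$ for $s<0$, the $E_1$ page is concentrated in columns $s\geq 0$, i.e. the spectral sequence is right half-plane. Convergence then follows from the chain-level data: the filtration $F^\bullet C$ is complete and Hausdorff, so by Boardman's criterion the spectral sequence is conditionally convergent, with abutment the cohomology of the limit complex, namely $\AQ^*_{Q_0}(Q,-)=H^*(\Der_{Q_0}(Q,-))$. Functoriality in the coefficient module $-$, and in morphisms of towers, is immediate, since the identification with the limit, the Jacobi--Zariski sequences, the filtration, and hence the whole spectral sequence, are natural in all these data.

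The delicate point I expect to be the real content is the convergence statement rather than the algebra of the underlying exact couple: cohomology does not commute with the inverse limit defining $C$, so one cannot hope for strong convergence of the form $E_\infty^{s,t}\cong\operatorname{gr}^sH^{s+t}$ in general. This is exactly why the statement is phrased conditionally --- one must verify completeness and Hausdorffness at the level of complexes, as above, and then invoke Boardman's framework, under which strong convergence becomes the separate vanishing of the derived term $RE_\infty$ (a $\varprojlim^1$-condition on the tower $\{H^*(\Der_{Q_0}(Q_s,-))\}_s$). A secondary technical point is the left-exactness of the Jacobi--Zariski sequences, which is where the cofibrancy of each structure map $Q_s\cof Q_{s+1}$ is essential and cannot be dropped.
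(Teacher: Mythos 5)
Your construction is correct, and it is the filtration-theoretic packaging of exactly the same data the paper uses: the Jacobi--Zariski short exact sequences $0\to\Der_{Q_s}(Q_{s+1},M)\to\Der_{Q_0}(Q_{s+1},M)\to\Der_{Q_0}(Q_s,M)\to 0$ supplied by Lemma~\ref{lemma:SequencesOfDerivations}. The paper's own proof assembles these into an exact couple with $D^{s,t}=H^{s+t}(\Der_{Q_0}(Q_{s+1},M))$ and $E^{s,t}=H^{s+t}(\Der_{Q_s}(Q_{s+1},M))$ and deduces conditional convergence from the fact that, in a right half-plane spectral sequence with $|d_r|=(r,1-r)$, the differentials entering any fixed spot eventually originate outside the half-plane (citing Boardman's Theorem~7.3); you instead filter $\Der_{Q_0}(Q,M)$ by the subcomplexes $F^s=\Der_{Q_s}(Q,M)$, verify completeness and Hausdorffness at the chain level via $\Der_{Q_0}(Q,M)=\varprojlim_s\Der_{Q_0}(Q_s,M)$, and invoke Boardman's criterion for complete Hausdorff filtrations. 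The paper explicitly records your version as an equivalent alternative immediately after its proof (and notes that its exact couple is the one attached to the tower of surjections $F^0/F^s\cong\Der_{Q_0}(Q_s,M)$), so the two routes yield the same first page and the same spectral sequence. What your route buys is a cleaner justification of conditional convergence --- completeness and Hausdorffness are checked directly on the complex, rather than read off from the geometry of the differentials --- and it makes transparent why only conditional (not strong) convergence can be asserted, via the $\varprojlim^1$ obstruction you name. One small point worth making explicit in your write-up: the surjectivity of $\Der_{Q_0}(Q,M)\to\Der_{Q_0}(Q_s,M)$ and the identification of $F^s/F^{s+1}$ both use that the composite $Q_s\cof Q$ (and $Q_{s+1}\cof Q$) is again a cofibration, which holds because each stage freely adjoins generators, so the tower's transfinite composites remain of the form $Q_s\star\PP(X)$.
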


The broad generality in which this spectral sequence appears implies
that further conditions are necessary for it to converge,
with the target being the one mentioned above.
Since $Q_0\longrightarrow Q$ is a cofibration of $\;\PP$-algebras that
is a cofibrant replacement of the morphism of $\;\PP$-algebras $B_0\longrightarrow B$,
the target of this spectral sequence is $\AQ^*_{B_0}(B,-)$. This immediate reinterpretation of the theorem is recorded in Corollary~\ref{cor:AQss};  
this is the situation we are interested in general.

Having done this, we study how our construction specializes when considering the 
algebraic models of J. F. Adams and P. J. Hilton~\cite{AH}, and 
D.~Sullivan~\cite{Infinitesimal} in rational homotopy theory. 
One of the main features of these algebraic models
of rational homotopy types is that they are built through towers of cofibrations in terms
of a cellular decomposition of choice. Our first result in this direction is 
Theorem~\ref{thm:Adams-Hilton}, which the reader is invited to compare
with that of S. Shamir~\cite{Shamir} and to the eponymous spectral sequence of 
J.-P. Serre. We expect this spectral sequence, in which all coefficients are 
\emph{rational}, to be isomorphic to the one obtained in~\cite{Cohenloops} by 
R. L. Cohen, J. D. S. Jones and J. Yan. It gives us a completely algebraic description of 
a multiplicative spectral sequence converging to the Chas--Sullivan loop product in 
string  topology for simply connected oriented closed manifolds.

\begin{introthm}\label{Intro:AH}
Let $X$ be a CW complex with exactly one $0$-cell, no $1$-cells and all its 
attaching maps are based with respect to the only $0$-cell. There is a convergent
first quadrant spectral sequence with
\[ 
	E_2^{s,t}  = \hom(H_s(X), H_t(\Omega X)) 
\;			\xRightarrow{\phantom{m}s\phantom{m}} \;
						H_{s+t}(LX),
			\]
Moreover, this is a 
spectral sequence of algebras, its product in the $E_2$-page is given by
the convolution product in $\hom(H_*(X), H_*(\Omega X))$. Whenever $X$ is
a simply connected oriented closed manifold, this product converges to 
the Chas--Sullivan loop product. 
\end{introthm}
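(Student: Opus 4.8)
The plan is to specialize Theorem~\ref{thm:SucesionEspectral} to the associative operad $\PP=\mathsf{Ass}$, taking for $Q$ the rational Adams--Hilton model of $X$ and for the tower $\mathcal T$ the filtration by skeleta. Write $A=(TV,d)$ for this model~\cite{AH}: it is a free, hence cofibrant, dg associative algebra with $H(A)\cong H_*(\Omega X;\Q)$ as algebras (Pontryagin product), and its generators are indexed by the cells of $X$, a cell of dimension $n$ contributing a generator in internal degree $n-1$. Since $X$ has a single $0$-cell, no $1$-cells, and based attaching maps, the skeletal filtration $\ast=X^{(1)}\subseteq X^{(2)}\subseteq\cdots$ lifts to a tower of cofibrations of $\mathsf{Ass}$-algebras $\kk=Q_0\cof Q_1\cof\cdots\cof\varinjlim_s Q_s=A$, in which each $Q_s\cof Q_{s+1}$ freely adjoins the generators dual to the cells attached at that stage, and $Q_s$ models the loops on the corresponding skeleton. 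Feeding this tower into Theorem~\ref{thm:SucesionEspectral} with coefficients in $M=A$ produces at once a functorial, conditionally convergent (in Boardman's sense) right half-plane spectral sequence; the absence of cells in low degrees forces it into the first quadrant.

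Next I would identify the pages and the abutment. A relative derivation out of a free extension is determined by its restriction to the new generators, so $\Der_{Q_s}(Q_{s+1},A)\cong\hom(W,A)$, where $W$ is the (finite-dimensional, single internal degree) space of generators adjoined at stage $s$; because the differential of each new generator lands in the previous stage, on which a relative derivation vanishes, the internal differential on this complex reduces to the one induced by $d_A$. Taking vertical cohomology therefore gives $E_1\cong\hom(\tilde C_\bullet(X),H_\bullet(\Omega X))$, the cellular cochains of $X$ valued in the homology of $\Omega X$, with $d_1$ the transpose of the cellular boundary. Passing to the second page then yields $E_2^{s,t}\cong\hom(H_s(X),H_t(\Omega X))$ in the appropriate bidegrees. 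For the abutment, Theorem~\ref{thm:SucesionEspectral} converges to the tangent cohomology $\AQ^*(A)=H^*(\Der_\kk(A,A))$, which for the associative operad is the Hochschild cohomology $\HH^*(C_*(\Omega X),C_*(\Omega X))$; this I would identify with $H_*(LX;\Q)$ through the standard description of the homology of the free loop space by the (cyclic) Hochschild complex of the chains on the based loop space.

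The multiplicative refinement is where the real work lies. The tangent cohomology $\AQ^*(A)$ carries the Gerstenhaber cup product, and I would verify that the filtration of $\Der_\kk(A,A)$ underlying Theorem~\ref{thm:SucesionEspectral} ---by relative derivations, equivalently by the number of generators a cochain is allowed to detect--- is compatible with $\smile$, so that the spectral sequence is one of algebras. The induced product on $E_2\cong\hom(H_*(X),H_*(\Omega X))$ should then be computed to be the convolution product assembled from the coalgebra structure on $H_*(X)$ dual to the cup product and the Pontryagin algebra structure on $H_*(\Omega X)$. Finally, when $X$ is a simply connected closed oriented manifold, Poincar\'e duality makes $C_*(\Omega X)$ a Calabi--Yau (homotopy Frobenius) algebra, and I would invoke the theorem of Cohen and Jones identifying the cup product on $\HH^*(C_*(\Omega X))$ with the Chas--Sullivan loop product on $H_*(LX)$, so that the abutment of our multiplicative spectral sequence is the loop product; this is also the step where the expected comparison with the spectral sequence of~\cite{Cohenloops} is made.

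The hard part will be twofold. First, establishing multiplicativity: one must show that the skeletal filtration is a filtration by subalgebras of the cup-product structure and pin down the resulting $E_2$-product as the convolution product, which requires a careful comparison of the brace/cup operations on derivations with the filtration rather than a routine transport of structure. Second, threading the degree conventions: the spectral sequence delivered by Theorem~\ref{thm:SucesionEspectral} is cohomologically graded, whereas the stated abutment $H_{s+t}(LX)$ is a homology, so the duality between the derivation complex and the cyclic bar complex ---together with finite-type rational duality, and Poincar\'e duality in the manifold case--- must be carried through consistently to match the bidegrees exactly as stated. I expect the first of these to be the genuine obstacle, the remaining steps following fairly directly from Theorem~\ref{thm:SucesionEspectral} and standard facts about Adams--Hilton models and Hochschild (co)homology.
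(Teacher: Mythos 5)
Your overall strategy coincides with the paper's: specialize Theorem~\ref{thm:SucesionEspectral} (via Corollary~\ref{cor:paginaE2}) to the Adams--Hilton model filtered by the degree of its generators, identify the $E_1$-page with cellular cochains valued in $H_*(\Omega X)$, and obtain the convolution product on $E_2$ by observing that only the quadratic part $d_2(f,g)$ of the brace $\{d;f,g\}$ survives to the associated graded. But there is one genuine gap: you assert that the abutment $\AQ^*(A)=H^*(\Der_\kk(A,A))$ ``for the associative operad is the Hochschild cohomology''. This is false as stated, and the paper is explicit about it (Remark~\ref{2.1.2}): for an associative algebra one only has $\AQ^n(A,A)\cong\HH^{n+1}(A)$ for $n\geq 1$, while $\AQ^0(A,A)=\Der(A)$; the derivation complex misses the summand corresponding to $A$ itself. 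Concretely, in your setup the filtration of $\Der_\kk(A,A)$ by the generators of the Adams--Hilton model produces an $E_2$-page built from the Quillen homology $H_{*+1}(X)$, i.e.\ from \emph{reduced} homology --- there is no $s=0$ column --- and an abutment that is shifted relative to $H_*(LX)$. The statement you are proving claims the full column $\hom(H_0(X),H_t(\Omega X))=H_t(\Omega X)$ and the unshifted target $H_{s+t}(LX)$.

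The paper repairs this with Proposition~\ref{prop:HHcone}: one replaces $\Der(A)$ by the cone of the adjoint map $\Ad_A:A\longrightarrow\Der(A)$ (equivalently, by $\Der(\widetilde{A})$ for the algebra $\widetilde{A}$ obtained by adjoining a Maurer--Cartan element $\varepsilon$), whose cohomology is genuinely $\HH^*(A)$, and then filters only the $\Der(A)$ summand of the cone. This is exactly what inserts the missing column $\hom(H_0(X),H_*(\Omega X))$ into the $E_2$-page and produces the degree shift turning $H_{*+1}(X)$ into $H_*(X)$. Your closing remarks about ``threading the degree conventions'' gesture at this problem, but the cone construction is the specific idea needed to resolve it, and without it the spectral sequence you build does not have the stated $E_2$-page or target. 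The remaining steps of your plan (the identification $\HH^*(\Sing_*(\Omega X))\cong H_*(LX)$ via Cohen--Jones and the multiplicative comparison with the Chas--Sullivan product) match the paper's.
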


We now turn our attention to Sullivan models. Let $p:E\longrightarrow B$ be a fibration
between simply-connected CW-complexes, with $E$ having finitely many cells. The grouplike 
topological monoid $\Aut(p)$ of self-fibre-homotopy equivalences of $p$ is an important 
object in algebraic topology; we point out as an example the classification theorem of 
J.~Stasheff~\cite{StasheffFibrations}, \cite{MayFibrations}*{Chapter 9}, and the more 
recent~\cite{Blomgren}. In~\cite{Fel10}, the authors show that the rational homotopy
type of the connected component $\Aut_1(p)$ of the identity of $\Aut(p)$ is determined 
completely in terms of the Harrison cohomology of the Sullivan model of $p$. We also 
point the reader to~\cite{BlockLazarev}, where a general relation between the
Harrison cohomology of Sullivan--de Rham algebras and homotopy types of function spaces 
is given, and to~\cite{BuijsMurillo}, where the authors show the rational homotopy 
groups of function spaces are also determined completely in terms of the Harrison 
cohomology of a Sullivan model. Building on top of the main result of \cite{Fel10}, 
we obtain the following; see Theorem~\ref{thm:SullivanFibrations}.

\begin{introthm}
	Let $F\hookrightarrow E \xrightarrow{p} B$ be 
	a fibration of $1$-connected CW-complexes, 
	with $E$ finite. There is a convergent
	spectral sequence with 
	\begin{equation*}
	E_2^{s,t} =  \hom(\pi_s(F),H^{t}(E)) \; \xRightarrow{\phantom{m}s\phantom{m}}
	\; \pi_{s-t}(\operatorname{Aut}_1(p)).
	\end{equation*}
\end{introthm}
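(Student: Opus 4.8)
The plan is to realise the fibration $p$ by a \emph{relative minimal Sullivan model} and to feed the resulting tower of cofibrations into the tangent-cohomology spectral sequence recorded in Corollary~\ref{cor:AQss}. Concretely, I would choose a relative minimal Sullivan algebra $\varphi\colon(\Lambda V_B,d)\cof(\Lambda V_B\otimes\Lambda V_F,d)=:\mathcal E$ modelling $F\hookrightarrow E\xrightarrow{p}B$; since $B$ and $E$ are $1$-connected and $E$ is finite, both $\Lambda V_B$ and $\mathcal E$ are minimal of finite type, and $\varphi$ is a cofibration of algebras over the commutative operad $\PP=\mathsf{Com}$ serving as a cofibrant replacement of the Sullivan model of $p$. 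Following~\cite{Fel10}, the rational homotopy of the identity component is computed by the relative tangent cohomology of $\varphi$ with coefficients in $\mathcal E$, that is $\pi_n(\Aut_1(p))\otimes\Q\cong H^{-n}(\Der_{\Lambda V_B}(\mathcal E,\mathcal E))=\AQ^{-n}_{\Lambda V_B}(\mathcal E,\mathcal E)$, up to the degree convention that produces the index $s-t$ in the abutment.

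Next I would build the tower $\mathcal T$ by filtering the fibre generators by degree. Setting $Q_s=\Lambda V_B\otimes\Lambda(V_F^{<s})$, minimality gives $d(V_F^{s})\subseteq\Lambda^{\geqslant 2}(V_F^{\leqslant s})\otimes\Lambda V_B$, and the $1$-connectivity of $B$ and $E$ forbids degree-one generators, so in fact $d(V_F^{s})\subseteq Q_s$. Hence each inclusion $Q_s\cof Q_{s+1}=Q_s\otimes\Lambda V_F^{s}$ is a relative Sullivan extension, which is a cofibration of $\PP$-algebras by Theorem~\ref{thm:CofibrationStructure}, and $\varinjlim_s Q_s=\mathcal E$. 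This exhibits $\varphi$ as the colimit of a tower of cofibrations, precisely the input of Corollary~\ref{cor:AQss}.

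I would then run that spectral sequence with coefficients in $\mathcal E$. A relative derivation in $\Der_{Q_s}(Q_{s+1},\mathcal E)$ is determined by its restriction $V_F^{s}\to\mathcal E$, yielding a natural identification $\Der_{Q_s}(Q_{s+1},\mathcal E)\cong\hom(V_F^{s},\mathcal E)$; because such derivations vanish on $Q_s$ and $d(V_F^{s})\subseteq Q_s$, the induced differential is exactly the internal differential of $\mathcal E$. Taking cohomology therefore produces $\hom(V_F^{s},H^{t}(E))$, and the Sullivan identification $V_F^{s}\cong\hom(\pi_s(F),\Q)$ together with finite type rewrites this as the claimed term $\hom(\pi_s(F),H^{t}(E))$. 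Conditional convergence in Boardman's sense is inherited verbatim from Theorem~\ref{thm:SucesionEspectral}, while the abutment is $\pi_{s-t}(\Aut_1(p))$ by the identification of the first paragraph.

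The hard part will be the clean identification of the $E_2$-page rather than the mechanics of the tower. One must check that, on the associated graded, the quadratic and higher part of the Sullivan differential $[d,-]$ genuinely drops out---this is where minimality and the absence of low-degree generators are essential---so that only the internal differential of $\mathcal E$ survives and the page collapses to $\hom(V_F^{s},H^{\ast}(E))$. Equally delicate is the bookkeeping that reconciles the bigrading and the linear-duality conventions of Corollary~\ref{cor:AQss} with those of~\cite{Fel10}, so that the abstract derivation spectral sequence reproduces exactly $\hom(\pi_s(F),H^{t}(E))\Rightarrow\pi_{s-t}(\Aut_1(p))$; any degree-one fibre generators arising when $F$ fails to be simply connected would have to be handled separately.
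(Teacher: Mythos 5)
Your proposal is correct and follows essentially the same route as the paper: a relative Sullivan model of $p$ filtered by the (cohomological) degree of the fibre generators, fed into the tower spectral sequence of Corollary~\ref{cor:AQss}, with the $E_2$-term identified via $V_F\cong\hom(\pi_*(F),\Q)$ and $H^*(\Lambda W\otimes\Lambda V)\cong H^*(E)$, and the abutment identified by Theorem~1 of~\cite{Fel10}. The only cosmetic difference is that you insist on a \emph{minimal} relative model so that $E_1=E_2$ outright, whereas the paper allows an arbitrary relative Sullivan model and reaches the same $E_2$-page through the Quillen homology of the fibre model as in Corollary~\ref{cor:paginaE2}.
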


Again, all coefficients above are rational. In this case, it should be possible to show
that the spectral sequence carries a multiplicative structure inherited from the 
convolution product on the $E_2$-page making it a spectral sequence of Lie algebras by
obtaining a result in the lines of Theorem 3 in~\cite{BuijsMurillo}, by replacing the
space of maps $\mathcal{F}(X,Y)$ by $\Aut_1(p)$. 
 
We remark that A. Berglund and B. Saleh~\cite{BerglundSaleh}*{Proposition 4.4}
have independently considered a spectral sequence of the same shape as 
ours in order to determine a Quillen (that is, dg Lie) model of the 
classifying space of  the grouplike monoid of homotopy automorphisms of a space 
that fix a given subspace. The same spectral sequence appears in the work of 
A. Berglund and I. Madsen, see~\cite{BerglundMadsenActa}*{Lemma 3.5}.
This gives a third direction in which our methods can be pushed towards,
and the interested reader can consult that paper for further details. 

The paper is organized as follows. In Section~\ref{sec:preliminares}
we briefly recall the elements of operad theory we will use, mainly to settle the notation.
In Section~\ref{sec:2}, we develop the main technical tool that we 
introduce in this paper: the spectral sequence of 
Theorem~\ref{thm:SucesionEspectral}. We conclude this section 
 by showing that our spectral sequence degenerates in some 
natural situations.
Finally, we give in Section~\ref{sec:AplicacionesRacionales}
the mentioned applications to rational homotopy theory.

\textbf{Conventions.}
Throughout, we write $\PP$ for a symmetric operad in the category of vector spaces
over a characteristic zero field $\kk$. We assume such an operad is reduced ($\PP(0) = 0$), 
and identify it with the corresponding Schur endofunctor $V\longmapsto \PP(V)$. All 
dg $\PP$-algebras are homologically $\mathbb{Z}$-graded unless stated otherwise.  
We also make use of the following functors and constructions: $\Sing_*$ is the
singular chains functor, $\Sing^*$ is the singular cochains functor, $\mathcal A$ 
is the Adams--Hilton construction, $\Cell$ is the cellular chain complex of a
CW-complex, $\Omega$ is the based loop space, and $L$ is the free loop space. 
The dual of a graded vector space $V$ is $V^\vee$. For a simply-connected 
topological space $X$, we write $\pi_*(X)^\vee = (\pi_*(X)\otimes \Q)^\vee$, omitting
the rational coefficients. All tensor products and $\hom$-sets are understood to 
be taken over the fixed characteristic zero base field $\kk$, which in Section 3
will be~$\Q$. Similarly, in that section, all (co)homology groups are assumed 
to have rational coefficients.

\subsection*{Acknowledgements} 

The first author thanks Aniceto Murillo for useful conversations, and acknowledges 
support from the IRC Postdoctoral Fellowship \texttt{GOIPD/2019/823},
from the Ministerio de Ciencia e Innovación (Spain) grant \texttt{PID2023-149804NB-I00},
and from the Junta de Andalucía research group \textrm{grupoPaidi-G-FEDER-FQM264}.
The second author thanks Andrea Solotar, Mariano Su\'arez-\'Alvarez and the 
participants of the homological algebra seminar in the University of Buenos
Aires for useful comments. We kindly thank Jim Stasheff for  useful comments 
and suggestions, and the anonymous referees for their careful reading of the
manuscript and for providing us with valuable feedback.


\section{Operadic background}
\label{sec:preliminares}

This section starts with a recollection of the necessary background on operad theory, 
mainly to set up notation. An excellent reference is \cite{LodVal}.

\subsection{$\Sigma$-modules and operads}

{{}} Write $\SMod$ for the category of dg $\Sigma$-modules. Recall there is a monoidal 
product, the \new{composition product} 
	\[ -\circ-:\SMod\times \SMod
			\longrightarrow
				\SMod 
				\] 
with unit $1$ the module concentrated in arity $1$ where its value is $\kk$. 
It is useful to think of an element in $X\circ Y$ as a corolla whose only 
vertex is labelled by $x\in X$ of some arity $k\in\NN$ and whose leaves are 
labelled in order by $y_1,\ldots,y_k \in Y$. This product restricts to the 
subcategory of non-graded $\Sigma$-modules, and a symmetric operad $\PP$
is a monoid in this category, whose product we usually denote by 
$\gamma : \PP\circ \PP \longrightarrow \PP$.
We can of course talk about graded or differential graded symmetric operads, 
but symmetric operads will suffice for our purposes. An operad is \new{reduced}
if $\PP(0) = 0$, and we will only consider this kind of operads in what follows.

We recall that there is a unital monad $\mathscr T$ in $\SMod$ and that symmetric
operads are precisely the $\mathscr T$-algebras. It follows in particular that if 
$X$ is any dg $\Sigma$-module, $\mathscr T_X$ is the \new{free operad on~$X$}. 
Concretely, for $n\in \NN$, the space $\mathscr T_X(n)$ consists of rooted trees 
$t$ so that each vertex of $t$ is decorated by an element in $X(d)$, where $d$
is the number of inputs of it. The product $\mu:\mathscr T\circ \mathscr T
\longrightarrow \mathscr T$ is obtained by substitution of trees into vertices, 
and the unit $ \eta : 1\longrightarrow \mathscr T$ sends an element of $X$ to 
the corresponding corolla. We refer the reader to~\cite{LodVal} for details. From
this we can present operads as quotients of free operads by ideals of relations. 
Classical examples include the operads $\textsf{As}$, $\textsf{Lie}$, $\textsf{Com}$, 
$\textsf{PreLie}$, $\textsf{Poiss}$, $\textsf{Ger}$ and $\textsf{Grav}$, whose
presentations can be found in the literature.

\subsection{Algebras over operads}

Recall we have fixed a reduced symmetric operad $\PP$ and write $\PP\hy\Alg$
for the category of dg $\PP$-algebras. 
In particular, every $\PP$-algebra $A$ includes the 
data of a square zero derivation $d$ of $A$, that is, is an endomorphism
$d:A\longrightarrow A$ so that $d^2=0$ and such that for each operation $\mu$ of $O$
of arity $n\in\NN$, $d\mu = \mu d^{[n]}$, where $d^{[n]}$ is the induced 
differential on $A^{\tt n}$. For example,
if $\mu$ is binary (so that $n=2$), the requirement is that
$d\mu = \mu(d\otimes 1+1\otimes d)$. 
Note that Koszul signs will appear in the previous 
formula when evaluating $1\otimes d$ on
elements of~$A$.
Alternatively, we require that the structure
map $\gamma^A : \PP\longrightarrow \End_A$ 
is one of complexes, where we view $\PP$ as a 
dg operad concentrated in degree zero. Adjoint
to this is a map 
$\gamma_A : \PP(A) \longrightarrow A$
which we call the \new{structure map} of $A$.
We will drop the prefix ``dg'' 
and speak simply of $\PP$-algebras. In all of 
what follows, with the exception of the
``tilde'' construction of Proposition~\ref{prop:HHcone}, we will \emph{only}
consider the category of 
$\PP$-algebras $A$ that are positively homologically graded.
In particular our associative algebras are non-unital.

\subsection{Modules over algebras}

Fix a dg $\PP$-algebra $A$ as before. An 
\new{operadic $A$-module} is a dg vector
space $M$ along with a unital action 
$\gamma_M :\PP\circ (A,M)\longrightarrow M$
so that 
$\gamma_M (1\circ(\gamma_A,\gamma_M)) = 
			\gamma_M(\gamma\circ (1,1))$.  
It is useful to note that if $\PP=\mathsf{As}$ 
and if $A$ is a $\PP$-algebra or, what is the 
same, an associative algebra, then an operadic 
$A$-module is the same as an $A$-bimodule and 
\emph{not} a left (or right) $A$-module. 
Similarly, the operadic modules for commutative 
algebras are the symmetric bimodules, and the 
operadic modules for Lie algebras coincide with 
the usual notion of Lie algebra representation.

\subsection{Cofibrations of $\PP$-algebras}
\label{sec: model categories}

For $A,B\in
\PP\hy\Alg$, we write $A\star B$ their
coproduct as $\PP$-algebras. 
The work 
done in~\cite{Hinich1997}, see also~\cite{GetJon,vallette2014homotopy,BerMoer},
shows that non-negatively graded $\PP$-algebras carry a model category structure
whose weak equivalences are the quasi-isomorphisms, and whose fibrations are 
the degree-wise surjections in positive degrees.

{{}} Let us introduce some useful definitions:
\begin{titemize}
\item A $\PP$-algebra $A$ is \emph{quasi-free} if, forgetting its differential, 
it is isomorphic as a graded $\PP$-algebra to the free $\PP$-algebra on a graded vector space $V$. 
That is, if $A \cong \PP(V) = \bigoplus_{n\geq 1} \PP(n)\otimes_{\Sigma_n}V^{\otimes n}$
as a graded $\PP$-algebra.
\item An object $R$ is \new{cofibrant} if the initial map is a cofibration, and \new{fibrant} if 
the final map is a fibration.
All $\PP$-algebras are fibrant (but we 
will not need this) and the cofibrant
objects are the quasi-free triangulated $\PP$-algebras.
\item A cofibration $B\cof A$ is 
\new{elementary of height $d$} if $A$ is 
obtained by adjoining a space of 
generators in degree $d+1$.
\item A morphism $f:B\longrightarrow A$ is
is \new{$n$-connected} if it induces isomorphisms on homology in degree $i<n$ and a surjection in
degree $n$. 
\item An algebra $A$ is \new{$n$-connected} if
the unique map $0\longrightarrow A$ is $n$-connected,
and it is 
\new{$n$-truncated} if $H_s(A)=0$ for 
$s>n$. 
\end{titemize}

We describe next a class of cofibrations
which correspond to the geometric
situation where we attach cells to a space.
Let $A$ be a $\PP$-algebra,
$V$ be a graded vector space concentrated in degree $n\geq 1$,
and $\delta:V\to Z_{n+1}(A)$ be a linear map.
Consider the coproduct of underlying (non-dg) graded algebras
$$B = A \star \PP(V).$$
In $B$, we consider the unique derivation $d$ which restricts to the original differential 
on $A$ and which extends $\delta$ as a derivation.  This derivation squares to zero,
that is, it is a differential.
The inclusion $A \cof  B$ is called an \new{elementary} or \new{standard} cofibration,
see \cite[2.2.3]{Hinich1997}.
A \new{tower of cofibrations} is a $\PP$-algebra $A$ which is the colimit 
of a tower of elementary cofibrations of $\PP$-algebras,
	\begin{equation*}
		A_0 \cof A_1  \cof \cdots \cof A_s \cof A_{s+1} \cof \cdots \cof \varinjlim_s A_s = A.
	\end{equation*}

\begin{definition} 
Let $\PP = \mathcal I \oplus \overline{P}$ be an augmented operad.
The  \new{functor of indecomposables}  $\ind :\PP\hy\Alg\longrightarrow \Cxs $
from $\PP$-algebras to chain complexes is given by taking the cokernel of the
structure map $\gamma_A$ of each $\PP$-algebra $A$ restricted to $\overline{\PP}(A)$. 
We also denote by  $\overline{A}$ this cokernel, 
and call it the \new{space of indecomposables of $A$}. 
\end{definition}

It is clear, for example, that if $A=(\PP(V),d)$ is quasi-free
then $\ind A=(V,d_{(1)})$ where $d_{(1)}$ is the linear part of the differential
$d$ of $A$, and we will use this later. In this way, the (derived) functor 
$\ind A$ captures the ``first order information'' of a cofibrant resolution of
the $\PP$-algebra~$A$.

{{}} 
\begin{definition}
We define the \new{Quillen homology of a 
$\PP$-algebra $A$}, which we write 
$H_*(\PP,A)$, as the left derived functor 
of $\ind$ evaluated at $A$.
 That is, if $B\weq A$  is a weak 
 equivalence and $B$ is a cofibrant $\PP$-algebra, 
the homology of $\ind B$ is by
definition $H_*(\PP,A)$. 
\end{definition}

To illustrate, 
if $A$ is an augmented associative algebra, 
this is just $\Tor^A_{*+1}(\kk,\kk)$.
 Indeed,
we can take $\Omega BA$ as a model of $A$,
and then $\H_*(\PP,A)$ is the homology of
the shifted reduced bar complex $\overline{BA}$
of~$A$, which is precisely
$\Tor^A_{*+1}(\kk,\kk)$.

\section{A spectral sequence of derivations}\label{sec:2}

This section is the core of the paper.
We pave the way for constructing 
the spectral sequence of Theorem~\ref{thm:SucesionEspectral}, 
the main result.
We start by recalling the relevant facts 
on derivation complexes in 
Subsection~\ref{sec:LongExactSequenceOfDerivations},
and recall the basic facts on tangent cohomology in 
Subsection~\ref{sec:AQ}.
The main result and some corollaries are 
proven in 
Subsection~\ref{ssec:spectral}.
We finish in Subsection~\ref{sec:CollapseResults}
by studying some natural situations in which the 
spectral sequence degenerates.

\subsection{The complex of derivations}\label{sec:LongExactSequenceOfDerivations}

In this section, we collect some general 
facts on complexes of derivations, which
we then use to construct the spectral
sequence of Theorem~\ref{thm:SucesionEspectral}. 
Some applications of the spectral sequence will 
require, for convergence reasons, complexes 
concentrated in non-negative degrees. 
However, the results of this section do not 
need this constraint. 

\begin{definition}
Let $B\longrightarrow A$ be a map
of $\PP$-algebras, and let $M$ be
an operadic $A$-module. We write 
$\Der_B(A,M)$
for the cohomologically	graded complex 
of derivations $A\longrightarrow M$ that
vanish on $B$.
Thus, a homogeneous element $f$ of degree $p$ in $\Der_B(A,M)$ is a linear map from $A$ to $M$ that vanishes on $B$, lowers homological degree by $p$
and is a derivation 
in the sense that for every $\theta\in \PP(n)$ and $a_1,...,a_n\in A$, we have
	\[ f\gamma_A(\theta;a_1,...,a_n) = \sum_{i=1}^n (-1)^{\varepsilon_i} \gamma_M(\theta;a_1,...,f(a_i),...,a_n).\]
We denote such an element in $\Der_B(A,M) $ by $f:A\vert B \longrightarrow M$. The differential of an {homogeneous} $f$
is $\partial f = d_M f -(-1)^{|f|} fd_A$, and as such is of degree $+1$.
\end{definition}

In case $u:A\longrightarrow U$ 
is a map of $\PP$-algebras, which
in particular makes $U$ into an 
$A$-bimodule, we will write $\Der_B(A,U)$ without explicit mention to the map
$u$, which will be understood from
context. In case $U=A$ and $u$ is the
identity, we write this complex simply
by $\Der_B(A)$. Observe that in this case, the differential $\partial F$ is
given by the bracket $[d_A,F]$, and
that $\Der_B(A)$ is a dg Lie algebra
under the Lie bracket of derivations.
Moreover, for each operadic $A$-module
$M$, the complex $\Der_B(A,M)$ is
a left Lie module over $\Der_B(A)$.

The following lemma says that this 
functor is
well behaved when its arguments
are cofibrations of algebras.
The resulting exact sequence is
called the \new{Jacobi--Zariski sequence} 
of the triple $B\longrightarrow A\longrightarrow A'$. It is 
straightforward to see
that it is functorial
on maps of algebras
$A'\longrightarrow U$.  

\begin{lemma}\label{lemma:SequencesOfDerivations} 
	Let $B\longrightarrow A\longrightarrow A'$ be a sequence 
	of morphisms of $\;\PP$-algebras, and
	let $u:A'
	\longrightarrow U$ be a map of $\;\PP$-algebras. Then, there is a 
	left exact sequence 
	\[
	\begin{tikzcd}
	0 \arrow[r] &
	\Der_A(A',U)
	\arrow[r] &
	\Der_B(A',U) 
	\arrow[r] &
	\Der_B(A,	U) 
	\arrow[r,dashed] & 0
	\end{tikzcd} \]
and it is exact if $A\longrightarrow A'$
is a cofibration.	
\end{lemma}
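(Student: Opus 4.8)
The plan is to construct the two maps explicitly, dispatch left exactness by an essentially formal argument, and then deduce surjectivity of the last map from the explicit description of cofibrations in Theorem~\ref{thm:CofibrationStructure}.

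First I would construct the maps. The left-hand map $\Der_A(A',U)\to\Der_B(A',U)$ is simply the inclusion: since $B\longrightarrow A'$ factors through $A$, any derivation $A'\longrightarrow U$ vanishing on the image of $A$ vanishes \emph{a fortiori} on $B$. The right-hand map is restriction along $A\longrightarrow A'$; here I pull back the $A'$-module structure on $U$ along $A\longrightarrow A'$ to regard $U$ as an $A$-module, so that $F\longmapsto F\circ(A\longrightarrow A')$ lands in $\Der_B(A,U)$, again because $B\longrightarrow A'$ factors through $A$. Both are chain maps for the differential $\partial F=d_U F-(-1)^{|F|}Fd$ defined above: using that $A\longrightarrow A'$ is a chain map, precomposition commutes with $\partial$, which is a routine verification.

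For left exactness, injectivity of the inclusion is immediate. For exactness at the middle term I observe that $F\in\Der_B(A',U)$ restricts to the zero derivation on $A$ precisely when $F$ vanishes on the image of $A$, which is exactly the condition $F\in\Der_A(A',U)$. Hence the kernel of the restriction map is the image of the inclusion, and this portion of the argument requires no hypothesis on $A\longrightarrow A'$.

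The only substantive point, and the expected main obstacle, is surjectivity of the restriction map when $A\cof A'$ is a cofibration. Here I invoke Theorem~\ref{thm:CofibrationStructure} to write $A'\cong A\star\PP(X)$ for a graded submodule $X$, exhibiting $A'$ as freely generated over $A$ by $X$. The structural fact I would use is the universal property of this coproduct for derivations: a derivation $A'\longrightarrow U$ into an $A'$-module is determined by, and may be prescribed freely through, a derivation $A\longrightarrow U$ together with a linear map $X\longrightarrow U$; equivalently, restriction yields an isomorphism of graded modules $\Der_B(A',U)\cong\Der_B(A,U)\oplus\hom(X,U)$ under which the subcomplex $\Der_A(A',U)$ corresponds to the summand $\hom(X,U)$. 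Granting this, given $G\in\Der_B(A,U)$ I extend it to $A'$ by declaring it to agree with $G$ on $A$ and to vanish on the generators $X$; the Leibniz rule then propagates these values to a well-defined derivation on all of $A\star\PP(X)$, which lies in $\Der_B(A',U)$ and restricts to $G$. The one thing to check carefully is this universal property in the operadic setting, namely that a choice of values on $A$ and on $X$ indeed determines a unique derivation on the free product, which is precisely where the explicit generator-level description of the cofibration does the real work.
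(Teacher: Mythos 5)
Your proposal is correct and follows essentially the same route as the paper: the paper likewise identifies the kernel of the precomposition map tautologically with $\Der_A(A',U)$ and obtains surjectivity by extending a derivation to $A'$ by declaring it to vanish on the module of generators of the cofibration $A\cof A'$. Your additional spelling-out of the splitting $\Der_B(A',U)\cong\Der_B(A,U)\oplus\hom(X,U)$ as graded modules is a harmless elaboration of the same idea.
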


\begin{proof}
	The map $A\longrightarrow A'$ induces a map $\Der_B (A',U) 
	\longrightarrow \Der_B(A,U)$ by precomposition
	whose kernel is tautologically isomorphic to $\Der_A(A',U)$. 
	Hence, it suffices to show
	the last map is surjective in
	case $A\longrightarrow A'$ is a
	cofibration. To do this, we observe that
	we can always lift a derivation $A\longrightarrow U$ 
	that vanishes on $B$ to one in 
	$A'$ by declaring it to vanish on the module of generators
	of $A\longrightarrow A'$, and we will always assume, for 
	consistency,
	that this is our choice of lift. This
	finishes the proof of the lemma. \end{proof}

\subsection{Tangent cohomology}\label{sec:AQ}

Let us now introduce the cohomology theory that will concern us
and serves as the unifying concept to present our results. 
Building on previous work of Getzler--Jones~\cite{GetJon} 
and Goerss--Hopkins~\cite{Goerss2000}, J.~Millès~\cite{Milles} gave the definitive
treatment for tangent cohomology for algebras over algebraic operads.

{{}} For a morphism of $\;\PP$-algebras, 
$B\longrightarrow A$ and an operadic $A$-module
$M$, we define the {tangent cohomology of $A$ 
relative to $B$ with values in $M$} as follows. 
Let $P\longrightarrow Q$ be a cofibrant replacement 
of the map $f:B\longrightarrow A$: by this we
 mean that this map is a cofibration between
 cofibrant $\PP$-algebras that fits into
 a commutative diagram
 $$ 
 \begin{tikzcd}
 P\arrow[r]\arrow[d] & Q \arrow[d]\\
 B\arrow[r] & A
 \end{tikzcd}$$
 where the vertical maps are quasi-isomorphisms. 
 The second vertical map
 makes $M$ into an operadic $Q$-module,
 and hence we can consider the complex
 $ \Der_P(Q,M) $
 of derivations $Q\longrightarrow M$
 that vanish on $P$. 
 
 \begin{definition} The cohomology of
 this complex is, by definition, the
 \new{tangent cohomology of 
 $A$ relative to $B$ with values in $M$},
 and we write it $\AQ^*_B(A,M)$. In 
 case we take $M=A$, we will speak about
 the \new{tangent cohomology of 
 the map $f:B\to A$}, and write it
 $\AQ^*(f)$.
 \end{definition}
 
Note that the Jacobi--Zariski sequence
shows that for a triple $B\longrightarrow A
	\longrightarrow A'$ of algebras, where
$A\longrightarrow A'$ is a cofibration,
we have
	a corresponding long exact sequence
	\[ 
	 \AQ^*_A(A',-) 
	 	\longrightarrow 
	 		\AQ^*_B(A',-) 
	 	\longrightarrow 	
	 	\AQ^*_B(A,-) \longrightarrow 
	 	\AQ^*_A(A',-)[1]. \]
	  
\begin{rmk}\label{2.1.2} Consider the situation when $A$ is an
associative algebra (without
differential), which we think of as concentrated in degree zero, and
take $Q\weq A$ a cofibrant replacement of 
$A$ in $\mathsf{Ass}\hy\Alg$, the cofibration
category of dga
algebras. Then we have identifications for $n\in\mathbb N$,
\[ \AQ^n(A,A) = 
\begin{cases}
\HH^{n+1}(A) & \text{ for $n\geqslant 1$}\\
\Der(A) &\text{ for $n=0$.} \\
\end{cases} \]
where $\HH^*(A)$ 
are the Hochschild cohomology groups of $A$; see~\cite{LodVal}*{Sec. 12.3}. 
\end{rmk}

The way to remedy this difference between
tangent cohomology of associative
algebras and their classical Hochschild
cohomology is as follows.

\begin{lemma} Let $\Ad_Q:
Q\longrightarrow \Der(Q)$ be the adjoint map of $Q$,
where $Q$ is any cofibrant replacement of the associative algebra concentrated in degree zero $A$. 
We have an isomorphism of
(shifted) Lie algebras $H^*(\cone(\Ad_Q)) \simeq 
\HH^*(A)$. 
\end{lemma}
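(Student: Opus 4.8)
The plan is to run the long exact sequence of the mapping cone of $\Ad_Q$, read off each cohomology group from the identifications already available in Remark~\ref{2.1.2}, and then upgrade the resulting additive isomorphism to one of graded Lie algebras. First I would note that, since $Q\weq A$ and $A$ is concentrated in homological degree zero, the cohomology $H^*(Q)$ (viewing $Q$ as a cochain complex) is $A$ concentrated in cohomological degree $0$. Moreover $\Ad_Q$ sends $q$ to the inner derivation $[q,-]$, which has cohomological degree $-|q|$, so $\Ad_Q\colon Q\to \Der(Q)$ is a degree-zero morphism of complexes and its mapping cone is defined, fitting into a long exact sequence
\[ \cdots \to H^n(Q) \xrightarrow{\Ad_*} H^n(\Der(Q)) \to H^n(\cone(\Ad_Q)) \to H^{n+1}(Q) \xrightarrow{\Ad_*} \cdots \]

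Next I would identify the pieces. By Remark~\ref{2.1.2} the groups $H^n(\Der(Q)) = \AQ^n(A,A)$ equal $\Der(A)$ for $n=0$ and $\HH^{n+1}(A)$ for $n\geqslant 1$, and vanish for $n<0$ because the theory here recovers the non-negatively graded Hochschild cohomology. The map $\Ad_*\colon H^0(Q)=A\to H^0(\Der(Q))=\Der(A)$ carries a cycle $q\in Q_0$ representing $a\in A$ to the derivation induced by $[q,-]$ on $H_0(Q)=A$, namely the adjoint action $\operatorname{ad}_a=[a,-]$. Since $H^n(Q)$ vanishes for $n\neq 0$, the long exact sequence degenerates into isomorphisms $H^n(\cone(\Ad_Q))\cong H^n(\Der(Q))=\HH^{n+1}(A)$ for $n\geqslant 1$, together with the four-term exact sequence
\[ 0 \to H^{-1}(\cone(\Ad_Q)) \to A \xrightarrow{\operatorname{ad}} \Der(A) \to H^0(\cone(\Ad_Q)) \to 0. \]
As $\ker(\operatorname{ad})=Z(A)=\HH^0(A)$ and $\coker(\operatorname{ad})=\Der(A)/\operatorname{Inn}(A)=\HH^1(A)$, this yields $H^{-1}(\cone(\Ad_Q))\cong \HH^0(A)$ and $H^0(\cone(\Ad_Q))\cong \HH^1(A)$. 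Assembling these gives $H^n(\cone(\Ad_Q))\cong \HH^{n+1}(A)$ for all $n$, which is exactly the additive content of the statement, with the word ``(shifted)'' recording the uniform shift by one.

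Finally I would promote this to a Lie-algebra statement. Here $\Der(Q)$ is a dg Lie algebra under the bracket of derivations, $Q$ is a dg Lie algebra under its commutator, the algebra $\Der(Q)$ acts on $Q$ by evaluation, and $\Ad_Q$ is equivariant and satisfies the Peiffer identity $\operatorname{ad}_q(q')=[q,q']$; that is, $\Ad_Q$ is a crossed module of dg Lie algebras. Its mapping cone therefore carries a canonical dg Lie bracket, given by the derivation bracket on $\Der(Q)$, the evaluation action between the two summands, and zero on the $Q[1]$-summand, and the additive isomorphisms above are compatible with the induced bracket on cohomology. The hard part will be this last compatibility: one must check that, under the identification of $\cone(\Ad_Q)$ with the full Hochschild cochain complex $C^\bullet(A,A)$ in which the cone restores the term $C^0(A,A)=A$ with boundary $\operatorname{ad}$, the cone bracket is carried to the Gerstenhaber bracket with the expected shift, in particular reconciling the center in degree $-1$ and the outer derivations in degree $0$ with $\HH^0(A)$ and $\HH^1(A)$. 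This sign-and-shift bookkeeping is the only genuine obstacle; the additive isomorphism is otherwise a direct consequence of the cone sequence and Remark~\ref{2.1.2}.
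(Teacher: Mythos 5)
Your argument is essentially the paper's: both run the long exact sequence of the mapping cone, use that $H^*(Q)=A$ is concentrated in degree $0$ together with Remark~\ref{2.1.2} to obtain isomorphisms with $H^*(\Der Q)$ in high degrees, and read off $\HH^0(A)$ and $\HH^1(A)$ as the kernel and cokernel of $\operatorname{ad}\colon A\to\Der(A)$ in the remaining four-term exact sequence. The only discrepancy is one of convention: the paper takes $\cone(\Ad_Q)=Q\oplus s^{-1}\Der(Q)$, so its isomorphism is degree-preserving, $H^n(\cone(\Ad_Q))\simeq\HH^n(A)$, whereas your standard cone lands on $\HH^{n+1}(A)$; the qualifier ``(shifted)'' in the statement refers to the degree of the Lie bracket (whose compatibility the paper, like you, does not actually verify in the proof), not to a shift in the grading of the isomorphism.
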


Observe that $\cone(\Ad_Q)$ is a (shifted) Lie
algebra obtained as the semidirect product of $Q$ and 
$s^{-1}\Der(Q)$, where the bracket is given, for 
$x+\varphi$ and $y+\psi \in \cone(\Ad_Q)$, by the
formula $[x+\varphi,y+\psi] = [x,y]+\varphi(y)-
\psi(x)-[\varphi,\psi]$. Notice that in general 
the cone of a morphism of dg Lie algebras does 
not carry a dg Lie algebra structure, but it
can be canonically endowed with an $L_\infty$-algebra
structure lifting that of the domain and the 
usual differential of the mapping cone, as
proved in~\cite{MR2361936}.

\begin{proof}
Since $H_*(Q) = A$, the long exact
sequence of the cone complex gives
us isomorphisms 
$H^n(\cone\Ad_Q) = H^{n-1}(\Der Q)$
for $n\geqslant 2$, giving the claim
of the theorem in that range. The
remainder of the long exact sequence
is a four term exact sequence
\[0\longrightarrow H^0(\cone\Ad_Q) \longrightarrow H^0(Q) 
	\longrightarrow H^0(\Der(Q))
		\longrightarrow H^1(\cone\Ad_Q) \longrightarrow 0.\] 
 Now observe that $H^0(\Ad_Q)$ is 
 just the adjoint map $A\longrightarrow
 \Der(A)$ of $A$. It follows the above exact sequence identifies $H^0(\cone(\Ad_Q))$ with
 the kernel of the map $A\longrightarrow \Der(A)$ and $H^1(\cone \Ad_Q)$ with
 its cokernel, which is what we wanted.  
We used that there is a quasi-isomorphism $\Der(Q)\simeq \Der(Q,A)$.
This is because for any weak equivalence $f:Q\xrightarrow{\simeq }A$, the 
map $\varphi:\Der(Q)\to \Der(Q,A)$ induced by mapping a derivation $d$ 
to the composition $f\circ d$ induces a quasi-isomorphism.
\end{proof}

It will be useful for us to
have an alternative description of this cone.
It is shown in~\cite{FMT} that to compute
$\HH^*(A)$ ---even in the case we allow 
$A$ to be dg--- one may instead consider 
a complex of derivations obtained from the
algebra $Q$ as follows. If $Q=(T(V),d)$, 
consider the algebra 
\[  \widetilde{Q} = 
	(T(V\oplus \varepsilon),\bar d)
\quad
\text{ where } \quad
|\varepsilon|=-1,\quad\bar d\varepsilon = \varepsilon^2,\quad 
\bar dv = dv +[\varepsilon,v]\]
Note that
$\varepsilon$ is a Maurer--Cartan element
and that $F=(T(\varepsilon),\bar d)$ is acyclic ---that is,
$H_*(F) = \kk$--- since for
each $n\in\NN$ we have that
  $\bar d(\varepsilon^{2n-1})=
\varepsilon^{2n}$.  
 
\begin{prop}\label{prop:HHcone}
For any quasi-free associative algebra $Q=(T(V),d)$
the Lie algebra $\Der(\widetilde{Q})$ is
quasi-isomorphic to the Lie algebra given by the cone of the adjoint map $\Ad_Q$. \end{prop}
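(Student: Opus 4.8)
The plan is to realise the statement through an explicit comparison morphism $\Phi\colon\cone(\Ad_Q)\to\Der(\widetilde{Q})$, and to concentrate the real work in showing it is a quasi-isomorphism, where the acyclicity of $F=(T(\varepsilon),\bar d)$ is decisive. The conceptual input is that the differential of $\Der(\widetilde{Q})$ is a twist: writing $\bar d=d'+\Ad(\varepsilon)$, with $d'$ the untwisted extension of $d$ that kills $\varepsilon$, the bracket $[\bar d,\place]$ splits as $[d',\place]+[\Ad(\varepsilon),\place]$, so $\Der(\widetilde{Q})$ is the Maurer--Cartan twist of the naive derivation complex by the element $\Ad(\varepsilon)$. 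Since $\widetilde{Q}=T(V\oplus\kk\varepsilon)$ is free, a derivation is the same datum as a linear map out of $V\oplus\kk\varepsilon$, and the fact that $|\varepsilon|=-1$ is exactly what produces the degree shift between the two sides. I would define $\Phi$ on generators by sending the summand $Q$ through $x\mapsto D_x$, the derivation with $D_x\varepsilon=x$ and $D_x|_V=0$, and the summand $s^{-1}\Der(Q)$ through the lift $\varphi\mapsto\widehat{\varphi}$ vanishing on $\varepsilon$; a first bookkeeping point is to check this is homogeneous for the desuspension convention.

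The first thing to verify is that $\Phi$ is a chain map, and this is the step where the precise twisted differential does the work. A short computation on generators gives $[\bar d,D_x]=D_{dx}+\widehat{\Ad_Q(x)}$: the part $d'$ produces $D_{dx}$, while the twisting term feeds $D_x\varepsilon=x$ back through $\bar dv=dv+[\varepsilon,v]$ to produce exactly the adjoint derivation $\Ad_Q(x)$ of $Q$. This is precisely the cone differential $x\mapsto dx+\Ad_Q(x)$, and on lifts one recovers the internal differential $[d,\varphi]$ of $\Der(Q)$, so the two differentials are intertwined.

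Next I would check compatibility with brackets, so that $\Phi$ is a morphism of (shifted) Lie algebras. The relevant identities are $[\Ad(a),\Ad(b)]=\Ad([a,b])$ and $[\delta,\Ad(a)]=\Ad(\delta a)$ for a derivation $\delta$; the mixed terms $\varphi(y)-\psi(x)$ of the semidirect-product bracket emerge from the commutators $[\widehat{\varphi},D_y]$ and $[D_x,\widehat{\psi}]$, while the sign in front of $[\varphi,\psi]$ is contributed by the desuspension on $s^{-1}\Der(Q)$, and the quadratic term $\bar d\varepsilon=\varepsilon^2$ must be tracked to see the remaining bracket term appear. Organising this verification by the number of occurrences of $\varepsilon$ makes the semidirect structure transparent, and I expect the only genuine difficulty here to be the consistent choice of signs dictated by the shift.

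The main obstacle is proving that $\Phi$ is a quasi-isomorphism, and this is where $H_*(F)=\kk$ is indispensable. I would filter $\Der(\widetilde{Q})$ by the number of $\varepsilon$'s appearing in the value of a derivation; because $\bar d$ raises this count by at most one, and because $Q$ is connected with finitely many words in each internal degree, the filtration is compatible with the differential and the associated spectral sequence converges in the required sense. On the associated graded the twisting term drops out, the $\varepsilon$-direction contributes only the homology of $F$, namely $\kk$, and the surviving complex is identified with $\cone(\Ad_Q)$, on which $\Phi$ induces an isomorphism. Comparing spectral sequences---equivalently, checking by a filtration argument that the mapping cone of $\Phi$ is acyclic---then yields that $\Phi$ is a quasi-isomorphism. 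The points I expect to cost the most effort are the convergence of this filtration and the precise identification of the associated graded with $\cone(\Ad_Q)$ itself, rather than with a shifted or twisted variant of it.
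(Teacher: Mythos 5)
Your route is genuinely different from the paper's. The paper writes no explicit comparison map on generators: it applies the Jacobi--Zariski sequence of Lemma~\ref{lemma:SequencesOfDerivations} to the cofibration $F\cof\widetilde{Q}$, identifies the outer terms $\Der_F(\widetilde{Q})\simeq\Der(Q)$ and $\Der(F,\widetilde{Q})=\widetilde{Q}$ using that $F$ is quasi-free and contractible, and then compares this short exact sequence with $0\to\Der(Q)\to\cone(\Ad_Q)\to Q\to 0$ through a ladder of quasi-isomorphisms. Your chain-level computation $[\bar d,D_x]=D_{dx}+\widehat{\Ad_Q(x)}$ is correct and captures the same mechanism, and your filtration by $\varepsilon$-count is a workable substitute for the ladder argument; the convergence issue you flag is real, since words such as $(\varepsilon v)^k$ with $|v|=1$ show the $\varepsilon$-count is unbounded within a fixed homological degree, so you must pass to the completed decreasing filtration and argue conditional convergence rather than boundedness.

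There is, however, a concrete error in your bracket step. Since $D_x$ vanishes on $V$, it vanishes on all of $T(V)\subset\widetilde{Q}$; as $D_y\varepsilon=y$ lies in $T(V)$, one gets $[D_x,D_y]=0$ identically, whereas the cone bracket produces the generally nonzero term $[x,y]$. Your proposed mechanism---that the quadratic term $\bar d\varepsilon=\varepsilon^2$ ``must be tracked to see the remaining bracket term appear''---cannot work, because the bracket on $\Der(\widetilde{Q})$ is the plain graded commutator and makes no reference to the differential. The mixed terms do come out as you say ($[\widehat\varphi,D_y]=D_{\varphi(y)}$ and $[\widehat\varphi,\widehat\psi]=\widehat{[\varphi,\psi]}$), so the defect of $\Phi$ from being a Lie morphism is exactly $D_{[x,y]}$. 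As a proof that the two complexes are quasi-isomorphic your argument stands, and that is in fact all the paper's own proof establishes; but the claim that $\Phi$ strictly intertwines the displayed bracket on $\cone(\Ad_Q)$ is false as written. You would need either to absorb the defect into a homotopy (an $L_\infty$-morphism whose quadratic component inserts the two arguments into $\varepsilon^2$ is the natural candidate) or to drop the strict Lie compatibility and handle the bracket at the level of the short exact sequences, as the paper implicitly does.
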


\begin{proof}

There is a quotient map
$\pi : \widetilde{Q} \longrightarrow Q$
with fibre $F$ 
and a cofibration $F \longrightarrow
 \widetilde{Q}$, which
 gives us a short exact sequence
\[\begin{tikzcd}
0\arrow[r] &\Der_F(\widetilde{Q}) 
\arrow[r]&
\Der(\widetilde{Q})
\arrow[r]&
\Der(F,\widetilde{Q})
\arrow[r]& 0\end{tikzcd}.\]
On the other hand, we have an identification $\Der(F,\widetilde{Q})=\widetilde{Q}$
and a quasi-isomorphism $\Der_F(\widetilde{Q}) \longrightarrow \Der(Q)$ 
coming from the fact that $F$ is quasi-free
and contractible. The takeaway is a commutative
diagram whose rows are exact and whose
columns but the second consist of quasi-isomorphisms: 
$$
\begin{tikzcd}
0\arrow[r] &
	\Der_F(\widetilde{Q}) 
\arrow[r]\arrow[d]&
\Der(\widetilde{Q})\arrow[d,equal]
\arrow[r]&
\Der(F,\widetilde{Q}) \arrow[d,"\mathrm{ev}_\varepsilon"]
\arrow[r]& 0 \\
0\arrow[r] &\Der(Q) 
\arrow[r]\arrow[d,equal]&
\Der(\widetilde{Q})
\arrow[r]\arrow[d]&
\widetilde{Q}
\arrow[r]\arrow[d,"\pi"]& 0\\
0\arrow[r] &\Der(Q) 
\arrow[r]&
\cone(\Ad_Q)
\arrow[r]&
Q
\arrow[r]& 0\end{tikzcd}$$
This shows that the Jacobi--Zariski short exact 
sequence above is quasi-isomorphic to the
short exact sequence for the cone of $Q$,
and hence that $\Der(\widetilde{Q})$ is
quasi-isomorphic to $\cone(\Ad_Q)$.
\end{proof}

\subsection{The spectral sequence}\label{ssec:spectral}

In this section, we present our main technical tool, a spectral
sequence converging under suitable hypotheses 
to the cohomology of the derivations of a
$\PP$-algebra that is the colimit of a tower of cofibrations. 
We will 
study the differentials in this  
spectral sequence, and give 
natural conditions that ensure 
its convergence in
Section~\ref{sec:CollapseResults}.

\begin{theorem}\label{thm:SucesionEspectral} Let $A$ be the colimit of a tower
	of cofibrations of $\;\PP$-algebras 
	\begin{equation*}
		A_0 \cof A_1  \cof \cdots \cof A_s \cof A_{s+1} \cof \cdots \cof \varinjlim_s A_s = A.
	\end{equation*}		
	There is a functorial right half-plane spectral sequence with first page 
	\[ E^{s,t}_1 = H^{s+t} 	(\Der_{A_s}(A_{s+1},-))
	\;			\xRightarrow{\phantom{m}s\phantom{m}} \; 
	H^{s+t}(\Der_{A_0}(A,-)).\]
\end{theorem}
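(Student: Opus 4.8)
\textbf{The plan} is to realise the spectral sequence as that of a complete, Hausdorff, decreasing filtration on the single cochain complex $\Der_{A_0}(A,-)$, whose associated graded pieces are identified by the Jacobi--Zariski sequence of Lemma~\ref{lemma:SequencesOfDerivations}. First I would apply the contravariant functor $\Der_{A_0}(-,M)$ to the tower. Since $A=\varinjlim_s A_s$ and derivations turn this colimit into a limit, one gets $\Der_{A_0}(A,M)=\varprojlim_s\Der_{A_0}(A_s,M)$, the inverse limit of a tower whose structure maps are the restrictions $\Der_{A_0}(A_{s+1},M)\to\Der_{A_0}(A_s,M)$. Because each $A_s\cof A_{s+1}$ is a cofibration, Lemma~\ref{lemma:SequencesOfDerivations} shows these restrictions are surjective, with kernel $\Der_{A_s}(A_{s+1},M)$. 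In particular this is a tower of surjections, so $\varprojlim^1$ vanishes at the chain level and the inverse limit agrees with the homotopy limit; equivalently, the filtration $F^sC:=\Der_{A_s}(A,M)=\ker\bigl(C\to\Der_{A_0}(A_s,M)\bigr)$ on $C:=\Der_{A_0}(A,M)$ is exhaustive ($F^0C=C$), Hausdorff ($\bigcap_s F^sC=0$, as $A=\bigcup_s A_s$) and complete.

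Next I would identify the associated graded. The restriction $\Der_{A_s}(A,M)\to\Der_{A_s}(A_{s+1},M)$ has kernel $F^{s+1}C$, and it is surjective because a derivation on $A_{s+1}$ vanishing on $A_s$ extends over the cofibration $A_{s+1}\cof A$ by declaring it to vanish on the later generators, exactly as in the proof of Lemma~\ref{lemma:SequencesOfDerivations}; hence $\operatorname{gr}^sC\cong\Der_{A_s}(A_{s+1},M)$. I would then splice the long exact cohomology sequences of the short exact sequences $0\to F^{s+1}C\to F^sC\to\operatorname{gr}^sC\to 0$ into an exact couple with $D$-terms $H^{s+t}(F^sC)$ and $E$-terms $E_1^{s,t}=H^{s+t}(\operatorname{gr}^sC)=H^{s+t}(\Der_{A_s}(A_{s+1},-))$, where $i$ is induced by the filtration inclusions, $j$ by projection to the associated graded, and $k$ is the connecting homomorphism; exactness is read off the long exact sequences. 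The differential $d_1=j\circ k\colon E_1^{s,t}\to E_1^{s+1,t}$ then has the correct bidegree, and since $s\geq 0$ the spectral sequence occupies a right half-plane and has exactly the stated first page. Functoriality in the coefficient module, and in morphisms of towers, is immediate because $\Der$, the Jacobi--Zariski sequence and the exact-couple construction are all natural.

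Finally, for convergence I would appeal to Boardman~\cite{Boardman}: the filtration $F^\bullet$ on $C$ is exhaustive, Hausdorff and complete, so the associated spectral sequence converges conditionally, in Boardman's sense, to $H^{s+t}(C)=H^{s+t}(\Der_{A_0}(A,-))$, which is the asserted abutment. The step requiring the most care is precisely this convergence bookkeeping. One must verify that completeness of the filtration—equivalently, that $\{\Der_{A_0}(A_s,M)\}_s$ is a tower of surjections with inverse limit $C$—places us within the scope of Boardman's conditional convergence, and that the target is genuinely $H^*(\Der_{A_0}(A,-))$ and not an object differing from it by a $\varprojlim^1$ correction. I do not claim strong convergence here; the natural hypotheses forcing it (degenerate $\varprojlim^1$ of the pages, or suitable connectivity of the tower) are taken up separately in Section~\ref{sec:CollapseResults}.
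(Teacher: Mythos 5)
Your argument is correct, but it is not the route the paper takes: you build the spectral sequence from the decreasing filtration $F^sC=\Der_{A_s}(A,M)$ of the single complex $C=\Der_{A_0}(A,M)$, i.e.\ from the exact couple with $D^{s,t}=H^{s+t}(F^sC)$, whereas the paper's proof forms its exact couple directly from the short exact sequences of the triples $A_0\cof A_s\cof A_{s+1}$, so that its $D$-terms are the cohomologies of the \emph{quotient} stages, $D^{s,t}=H^{s+t}(\Der_{A_0}(A_{s+1},M))\cong H^{s+t}(F^0C/F^{s+1}C)$, with $i$ of bidegree $(-1,1)$ induced by restriction. The two exact couples are the standard pair attached to a filtered complex and yield the same spectral sequence from $E_1$ onwards; indeed the paper records your filtration, the identification $F^s/F^{s+1}\cong\Der_{A_s}(A_{s+1},M)$, and the tower of surjections $F^0/F^{s+1}\to F^0/F^s$ immediately after its proof, remarking explicitly that it chose the quotient-tower couple rather than the filtration. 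What each buys: the paper's choice makes conditional convergence essentially degenerate to a positional argument (differentials entering a fixed spot originate outside the half-plane for $r>t$, and the colimit direction of its $D$-tower vanishes since $\Der_{A_0}(A_0,M)=0$), and it keeps every term an honest derivation complex of a finite stage, which is what powers the later explicit description of $Z_r$, $B_r$ and the higher differentials as cocycle-extension problems. Your choice front-loads the convergence bookkeeping instead, and you do it correctly: surjectivity of the restrictions gives $\varprojlim^1$-vanishing at the chain level, hence completeness and Hausdorffness of $F^\bullet$, hence (via the Milnor sequence for the tower $\{F^sC\}$, whose homotopy limit is acyclic) the vanishing of $\varprojlim_s H(F^sC)$ and ${\varprojlim_s}^1 H(F^sC)$ required by Boardman for conditional convergence to the colimit $H(F^0C)=H(\Der_{A_0}(A,M))$; your caution that the $\varprojlim^1$-correction issue belongs to \emph{strong} convergence, deferred to Section~\ref{sec:CollapseResults}, is exactly right.
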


A consequence of this result is
the following corollary which
involves tangent cohomology.
In fact, we will be mostly concerned in
the situation described in the statement
of this corollary:

\begin{corollary}\label{cor:AQss}
	Let $f: A_0\longrightarrow A$ be a cofibration of $\;\PP$-algebras that
	is a cofibrant replacement of a map of $\;\PP$-algebras $B_0\longrightarrow B$. If
	$f$ is the colimit of a tower of
	cofibrations as above,
	there is a functorial
	right half-plane spectral sequence with first page 
	\[ E^{s,t}_1 = H^{s+t} 	(\Der_{A_s}(A_{s+1},-))
	\;			\xRightarrow{\phantom{m}s\phantom{m}} \;	\AQ^{s+t}_{B_0}(B,-).\] \qed
\end{corollary}

We now proceed to prove Theorem~\ref{thm:SucesionEspectral}. 

\begin{proof}
	Let $M$ be an operadic $A$-module,
	and for each $s\geq0$, consider the exact sequence of derivations induced by the triple $A_s\cof A_{s+1} \cof A$, as in Lemma~\ref{lemma:SequencesOfDerivations}:
	\begin{equation*}
		0 \longrightarrow \Der_{A_{s+1}}(A,M) \longrightarrow \Der_{A_s} (A,M) \longrightarrow \Der_{A_s}(A_{s+1},M) \longrightarrow 0.
	\end{equation*} Form the exact couple associated to the long exact sequence in cohomology,  explicitly given for every $s,t\in\NN$ by 
	\begin{equation*}
		D^{s,t}= H^{s+t} (\Der_{A_s}(A,M)), \qquad \textrm{and} \qquad E^{s,t}= H^{s+t} (\Der_{{A_{s}}}(A_{s+1},M)).
	\end{equation*} The maps $i,j,k$ forming the exact couple $(D,E,i,j,k)$ have bidegrees $(-1,1), (0,0)$ and $(1,0)$, respectively and are described after the proof. In the bidegree pair $(s,t)$, we are denoting by $s$ the filtration degree and by $t$ the complementary degree. The differential  $d_1=jk$ produced on $E=E_1$ is of bidegree $(1,0).$ The general procedure of forming iterated derived couples produces the spectral sequence, finding that  the $r$th differential $d_r$ has bidegree $(r,1-r)$. Thus, differentials entering into a fixed module $E_r^{s,t}$ originate at points outside the right-half-plane for all $r>t$, so eventually vanish. 
	
	We now notice that we have  a decreasing filtration $F^s = \Der_{A_s}(A,M)$ on $\Der_{A_0}(A,M)$ given by those derivations $A \longrightarrow M$ that vanish on $A_s$,
	and the exact couple arises from this filtration, see~\cite{McCleary}*{Proposition 2.11}. The exact sequence $0 \longrightarrow 
	F^{s+1} \longrightarrow 
	\Der_{A_0}(A,M) \longrightarrow
	\Der_{A_0}(A_{s+1},M) 
	\longrightarrow 0$ then shows that $\Der_{A_0}(A,M)/ F^{s+1}
	\cong \Der_{A_0}(A_{s+1}, M)$. Since the tower $A_*$ exhausts $A$, 
	it follows that $F^\infty=0$ i.e., that this filtration is Hausdorff, 
	and thus to show that $RF^\infty = 0$, it suffices for us to show that the 
	canonical map
	\[
	\Der_{A_0}(A,M) \longrightarrow
	\lim_s \Der_{A_0}(A_{s+1},M)
	\]
	is surjective. This is equivalent to the condition that a system of derivations 
	on the tower $A_*$ whose restrictions overlap correctly can be promoted to a
	derivation on the colimit $A$, which also holds. To conclude, we notice that 
	since $F$ is Hausdorff the spectral sequence converges weakly in the sense 
	of~\cite{McCleary}*{Definition 3.1}.

	The spectral sequence is functorial, for 
	if $M\longrightarrow N$ is map of $A$-modules, then the morphism induced between the corresponding 
	short exact sequences of derivations induces a morphism
	of the corresponding exact couples, producing the
	desired  morphism of spectral sequences.
\end{proof}

Another corollary of our main result
is a spectral sequence that in many
situations degenerates and exhibits
tangent cohomology as a 
twisted algebra defined in terms of
Quillen homology and ordinary homology
of $\PP$-algebras, in the same spirit
as the usual way of computing Hochschild
cohomology of an associative algebra
$A$ through a twisted complex 
$\hom_\tau(C,A)$ where $C$ is an
$A_\infty$-coalgebra model of $BA$
with associative twisting cochain
$\tau : C \longrightarrow A$.

To state it, let us
take $A$ to be a cofibrant 
$\;\PP$-algebra of the form
$\PP(V)$ that is a cofibrant replacement
of the $\PP$-algebra $B$, and 
write $A$ as the colimit of the 
tower of cofibrations of the form
\[ \kk  \cof
\PP(V_{\leqslant 0}) \cof \PP(V_{\leqslant 1}) 
\cof \cdots \cof \PP(V_{\leqslant s}) \cof \cdots,\]
where for each $s\in\NN$, we write
$A_{s+1} =\PP(V_{\leqslant s})$ for the
subalgebra of $A$ generated by 
elements of degree at most~$s$. 

\begin{corollary}
	\label{cor:paginaE2}
	Assume that $B$ is $0$-connected. 
	There is a convergent functorial right half-plane spectral sequence with second page 
	\[ E^{s,t}_2 = 
	\hom(H_s(\PP,B),H_t(-))
	\;			\xRightarrow{\phantom{m}s\phantom{m}} \;	\AQ^{s+t}(B,-).\] \qed
\end{corollary}

\begin{proof}
	Let us consider the cofibration
	$\kk =A_0 \cof \PP(V)=A$ and compute the homology of the complex of derivations of $A$ relative to $A_0$. 
	For each $s\in\mathbb N_0$ the cofibration 
	$A_s\cof A_{s+1}$
	is obtained by adding generators in degree $s$, and it is elementary. 
	
	{{}} We can easily
	identify the $E_1$-page: a closed derivation $f$ of degree $s+t$ in $\Der_{A_s}(A_{s+1},M)$
	is determined on 
	the generators of $A_{s+1}$ living in
	degree $s$, whose image are in homological degree $t$. 
	From this and the fact $f(dv)= 0$ 
	for a generator $v$ of degree $s$, we see that $f$ must
	have image in the cycles of $M$. 
	Therefore, we have an identification
	\[ E_1^{s,t}=\hom(V_s,H_t(M)).\]
	Since $B$ is $0$-connected, we can
	assume that $V_0=0$. Using this, we now
	check that the 
	differential on the first page is $\hom(d_{(1)},1)$ 
	where $d_{(1)}:V\longrightarrow V$ is the linear part of the
	differential of $\PP(V)$.
	Indeed, let us take a
	cocycle representative $f : A_{s+1}\mid A_s\longrightarrow M$,
	let $F$ be extension by zero to $A_{s+2}$, so that
	$d_1\cls{f} = \cls{[d,F]}$.
	We now observe that
	if $w$ is a generator of $A_{s+1}\cof A_{s+2}$, then
	$F$ vanishes on $w$. 
	The remaining term is $(-1)^{|F|+1} f(dw)$, 
	and now we
	note that any term in $dw$ that is not linear must be
	a product of lower degree terms, which $f$ vanishes on. Since $d_{(1)}$ computes
	the Quillen homology of $B$,  
	the description of the $E_2$-page
	is what we have claimed. 
\end{proof}

We remark that if both $B$ and 
$M$ in Corollary \ref{cor:paginaE2} have zero differential,
the spectral sequence collapses
and yields an isomorphism 
\[ H^*(\hom(H_*(\PP,B),M)) \longrightarrow \AQ^*(B,M) \]
that exhibits tangent
cohomology as the cohomology of
a twisted complex. This follows
immediately from Lemma~\ref{lemma:VanishingLemmas} which
shows that the $E_1$-page is concentrated in one row.
In this sense, the tower of cofibrations
above is rather
crude, and it is perhaps not unreasonable to
consider other more refined towers to expect a
more meaningful spectral sequence.		

\subsection{Collapse results} \label{sec:CollapseResults}

In this section, we collect some natural conditions that guarantee certain vanishing 
patterns on the $E_1$-page of the spectral sequence constructed in Theorem~\ref{thm:SucesionEspectral}.

\smallskip
 Recall that a $\PP$-algebra $U$ is {$b$-truncated} if $H_p(U)=0$ 
for all $p\geq b+1$, and that a  cofibration $B \cof A$ is {elementary of height $s$} if $A= B \star \PP(V)$ with $V=V_{s+1}$. In this case, one has that $d(V)\subseteq B$.

\begin{lemma} 
	\label{lemma:VanishingLemmas}
	Let $B\cof A$ be an elementary cofibration of height $s$, and $u:A\longrightarrow U$ a morphism of $\,\PP$-algebras. The following holds: 
	\begin{tenumerate}
		\item If $U=U_{\geq k}$ for some $k$, then $\Der_B^p(A,U)=0$ for all $p\geq s+2-k.$
		\item If $U$ is $b$-truncated for some $b$, then $H^p(\Der_B(A,U)) = 0$ for all $p \leq s-b.$
	\end{tenumerate}
\end{lemma}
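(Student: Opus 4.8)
The plan is to reduce the entire statement to a single computation with a $\hom$-complex. Since $B\cof A$ is elementary of height $s$, we have $A=B\star\PP(V)$ with $V=V_{s+1}$ concentrated in homological degree $s+1$, and $d(V)\subseteq B$. Because $A$ is freely generated over $B$ by $V$, a derivation $A\to U$ vanishing on $B$ is determined by, and may be prescribed arbitrarily on, its restriction to the generators. First I would record the resulting isomorphism of graded vector spaces $\Der_B(A,U)\cong\hom(V,U)$.

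Next I would identify the differential under this isomorphism. For a derivation $F:A\mid B\to U$ and a generator $v\in V$, the formula $\partial F=d_U F-(-1)^{|F|}Fd_A$ gives $(\partial F)(v)=d_U(F(v))-(-1)^{|F|}F(d_A v)$; since $d_A v\in B$ and $F$ vanishes on $B$, the second term is zero. As a derivation is determined by its values on $V$, this shows $\partial$ corresponds to post-composition with $d_U$. As $V=V_{s+1}$ sits in a single homological degree it carries no internal differential, so $\Der_B(A,U)$ is, up to the evident regrading, the complex obtained by applying $\hom(V_{s+1},-)$ to $U$.

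The two assertions are then bookkeeping. A derivation of cohomological degree $p$ lowers homological degree by $p$, so it sends $V_{s+1}$ into $U_{(s+1)-p}$; hence $\Der_B^p(A,U)\cong\hom(V_{s+1},U_{(s+1)-p})$. For part (1), if $U=U_{\geq k}$ this group vanishes as soon as $(s+1)-p<k$, that is, for $p\geq s+2-k$, which is already a statement at the level of cochains. For part (2), since $V_{s+1}$ is finitely generated and free the functor $\hom(V_{s+1},-)$ is exact, so it commutes with homology and $H^p(\Der_B(A,U))\cong\hom(V_{s+1},H_{(s+1)-p}(U))$; if $U$ is $b$-truncated this vanishes whenever $(s+1)-p\geq b+1$, i.e.\ for $p\leq s-b$, as desired.

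The only point requiring care is the degree bookkeeping together with the collapse of the differential to $d_U\circ(-)$; the decisive input there is the relation $d(V)\subseteq B$ built into the notion of an elementary cofibration, which is precisely what kills the $F\circ d_A$ term on generators. Once that is in place both vanishing ranges drop out of the single identification $\Der_B^p(A,U)\cong\hom(V_{s+1},U_{(s+1)-p})$ and its passage to homology.
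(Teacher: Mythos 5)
Your proof is correct and follows essentially the same route as the paper's: both rest on the facts that a derivation vanishing on $B$ is determined by its values on $V=V_{s+1}$, that a degree-$p$ derivation lands in $U_{s+1-p}$, and that $d(V)\subseteq B$ kills the $F\circ d_A$ term. The only difference is presentational: you package the argument as the identification $\Der_B(A,U)\cong\hom(V_{s+1},U)$ with differential $d_U\circ(-)$ and invoke exactness of $\hom(V_{s+1},-)$, whereas the paper carries out the same cocycle-is-a-boundary computation by hand for part (2).
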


\begin{proof} The fact that $B\cof A$ is an elementary cofibration implies
that any $F \in \Der_B(A,U)$ is
completely determined by its image on $V$, where $A=B\star TV$ and $V=V_{s+1}$. 

To prove the first statement, 
let us take $F \in \Der_B^p(A,U)$. Since $F$ has degree $-p$
the image of $F$ lies in $U_{s+1-p}$. It follows that
$F$ vanishes if $p\geq s+2-k$.
For the second statement, let us
take a cocycle $F\in \Der_B^p(A,U)$ of degree $p\leq s-t$, and show that $F=\partial G$ for some $G\in \Der_B^{p-1}(A,U)$.

Recall that  $F$ is determined by its image on $V$, and that for any generator $v\in V$ we have $F(v)\in U_{s+1-p}$. Given that $F$ is a cocyle, for any such generator $v$ 
we have that
\[ \partial F(v)=dF(v)-(-1)^pFd(v)=0.\] But $d(v)\in B$, because $B\cof A$ is elementary, so $Fd(v)=0$ and thus $F(v)$ is a cycle in $U_{s+1-p}$. 
Since $p\leq s-b$, it turns out that $F(v)\in U_{\geqslant b+1}$ is a boundary. Let $G(v)\in U$ be such that $dG(v)=F(v)$, which can be done since we work over a field. 
This defines $G\in \Der_B^{p-1}(A,U)$ such that $\partial G=F$,
which is what we wanted. Indeed,
$\partial G$ and $F$ are derivations that
coincide on every $v\in V$, and 
hence coincide on all of~$A$. 	
\end{proof}

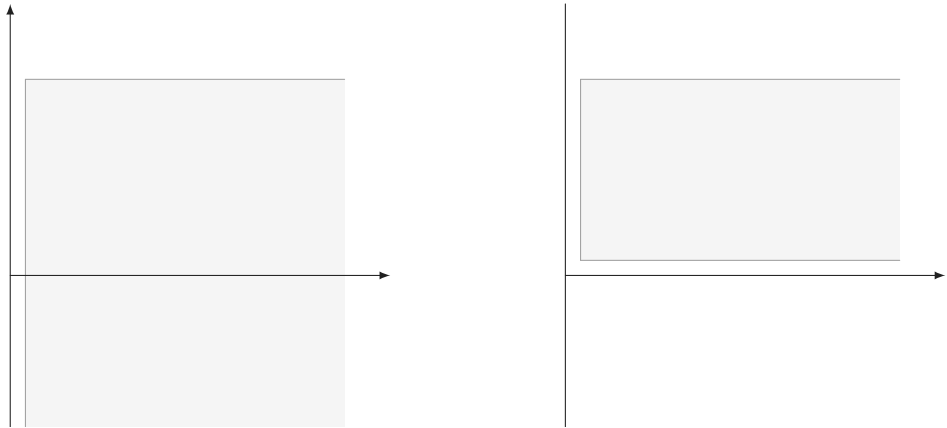
\begin{figure}[h]
	\centering
	\begin{minipage}{0.45\textwidth}
		\centering
		\begin{tikzpicture}[line cap=round,line join=round,>=stealth,x=2.0cm,y=2.0cm, rotate = 270]
		\clip(-2,2.5) rectangle (1,-0.3);
		\fill[color=aqaqaq,fill=aqaqaq,fill opacity=0.1] (1.8,0.1) -- (-1.3,0.1) -- (-1.3,2.2) -- (1.8,2.2);
		\draw [color=aqaqaq] (1.8,0.1)-- (-1.3,0.1);
		\draw [color=aqaqaq] (-1.3,0.1)-- (-1.3,2.2);
		\draw [->,>=latex,color=sqsqsq] (1.8,0) -- (-1.8,0);
		\draw [->,>=latex,color=sqsqsq] (0,0) -- (0,2.5);
		\end{tikzpicture}
	\end{minipage}\hspace{1 em}
	\begin{minipage}{0.45\textwidth}
		\centering
		\begin{tikzpicture}[rotate = 270, line cap=round,line join=round,>=stealth,x=2.0cm,y=2.0cm]
		\clip(-2,2.5) rectangle (1,-0.3);
		\fill[color=aqaqaq,fill=aqaqaq,fill opacity=0.1] (-0.1,0.1) -- (-1.3,0.1) -- (-1.3,2.2) -- (-0.1,2.2);
		\draw [color=aqaqaq] (-0.1,0.1)-- (-1.3,0.1);
		\draw [color=aqaqaq] (-1.3,0.1)-- (-1.3,2.2);
		\draw [color=aqaqaq] (-0.1,0.1)-- (-0.1,2.2);
		\draw [->,>=latex,color=sqsqsq] (-1.8,0) -- (1.8,0);
		\draw [->,>=latex,color=sqsqsq] (0,0) -- (0,2.5);
		\end{tikzpicture}
	\end{minipage}
	\captionsetup{labelformat=empty}
	\caption{A nice filtration and a truncated target.}
	\label{fig:btruncatedcell}
\end{figure}

\begin{definition} A tower of cofibrations 
$A_0 \cof A_1 \cof 
\cdots \cof A_s
\cof\cdots
\cof 
\varinjlim_s A_s = A$ is \new{cellular} if each $A_s \cof A_{s+1}$ is elementary of height $s-1$. That is, if for every $s\in\NN$ the algebra $A_{s+1}$ is obtained
from $A_s$ by freely adjoining a
space of generators in homological degree~$s$.
\end{definition}

{{}} Recall that a quasi-free $\PP$-algebra $\left(\PP(X),d\right)$ is triangulated 
if the space of generators $X$ comes equipped with an exhaustive ascending
filtration
\[
0 = F_0X \subseteq F_1X \subseteq \cdots \subseteq F_pX \subseteq \cdots \cup_p F_pX = X
\]
with the extra property that $d\left(F_pX\right)\subseteq \PP\left(F_{p-1}X\right)$.
Triangulated algebras are good examples of colimits of 
cellular towers of cofibrations, and
in particular the three algebraic models of 
Adams--Hilton, Quillen and Sullivan, 
respectively, are good examples of
colimits of towers of cofibrations relevant
to topology. In Section~\ref{sec:AplicacionesRacionales} we
will exploit this observation to
apply our methods to rational
homotopy
theory.

\begin{corollary}  \label{cor:CollapseResults} 
	Let $A$ be the colimit of a cellular tower of cofibrations of $\;\PP$-algebras
	\[ A_0 \cof A_1 \cof 
\cdots \cof A_s
\cof\cdots
\cof 
\varinjlim_s A_s = A,\] and let $u:A \longrightarrow U$ be a morphism
	of $\;\PP$-algebras. The following holds: 
	\begin{tenumerate}
		\item If $U=U_{\geq k}$ for some $k$, then $E_1^{s,t}=0$ for all $t\geq 1-k$. 
		\item  If $U$ is $b$-truncated for some $b$, then $E_1^{s,t}=0$ for all $t\leq -(b+1)$.
	\end{tenumerate} 
	In particular, if $U$ is $0$-truncated
	and $U=U_{\geqslant 0}$, the spectral
	sequence degenerates at the second page.
\end{corollary}

\begin{proof} The description
of the $E_1$-page of our spectral
sequence and Lemma~\ref{lemma:VanishingLemmas} 
imply the first two statements, if we 
recall that in a cellular tower each
$A_s\cof A_{s+1}$ is elementary of height 
$s-1$. In case $b=k=0$, we obtain that
$E_1^{s,t}=0$ unless $t=0$, so that the
first page is concentrated in one row,
and the claim follows.
\end{proof}

\section{Applications to rational homotopy theory}\label{sec:AplicacionesRacionales}

In this section, we apply the spectral 
sequence of Theorem~\ref{thm:SucesionEspectral} 
to classical rational homotopy theory using
the models for spaces due to Adams--Hilton and Sullivan. 
We assume the reader has certain familiarity with rational
homotopy theory and recommend the book~\cite{RHT} as
a reference to this subject.	
Since these models are constructed as colimits
of towers of cofibrations of associative and associative
commutative dg algebras, respectively, this context 
is a natural starting point to consider what the
specialization of our spectral
sequence look like. 

We remark that, unlike what we do here,
one can use this spectral sequence over positive
characteristic provided the corresponding operad is
well-behaved in that setting, such as the associative one. 
This may provide with further applications to $p$-local 
homotopy theory by exploiting, for instance, 
the Adams-Hilton construction carried over the ring of
integers localized at a prime;
see for example~\cite{AnickBook, AnickSpheres, NeisendorferProduct}.

\bigskip

\noindent\textit{\textcolor{newcol}{Conventions for the 
section.}} In this section, we change the bidegree in the 
spectral sequence of Theorem
\ref{thm:SucesionEspectral} by $(s,t)\mapsto (s,-t)$. 
This gives more natural formulas, avoiding 
negative signs in the applications that follow. 
The spectral sequence is still right-half-plane,
but the differential $d_r$ changes its bidegree to 
$(r,r-1)$, and under convergence  assumptions, the  $E_\infty$-page 
recovers the cohomology of the target 
by the formula 
\[ (\operatorname{gr} H)^p = \bigoplus_{s=p+t} E_\infty^{s,t}.\] 
We also revert to using the qualifier
\emph{dg} in front of algebras to avoid
any confusion when dealing with
(co)chain algebras, for example.
\subsection{Three flavours of homology}

{{}} Let us now draw up 
some simple 
connections between the existing 
geometrical models of spaces from
the work of F. Adams and P. Hilton,
D. Quillen, and D. Sullivan,
 and 
the various homology functors we
have at hand, namely:

\begin{tenumerate}
\item The functor $H_*(-):\PP\hy\Alg
\longrightarrow \PP\hy\Alg$
 that assigns to a
(dg) $\PP$-algebra $A$ its homology
 groups $H_*(A)$. As our notation
suggests, these are also (non dg)
$\PP$-algebras.

\item The functor $H_*(\PP,-):\PP\hy\Alg
\longrightarrow \PP_\infty\hy\Cog$ that assigns to a
$\PP$-algebra $A$ its Quillen homology
groups $H_*(\PP,A)$. As our notation
suggests, these are homotopy
$\PP$-coalgebras. We will use this
only for $\PP$ the associative operad.
\item The assignment  (but not functor)
$\AQ_* : \PP\hy\Alg \longrightarrow 
\mathsf{Lie}\hy\Alg$ that assigns to a
$\PP$-algebra $A$ its tangent
 cohomology groups with values in
itself $\AQ_*(A,A)$. 
\end{tenumerate}

{{}} On the other hand,
we have three classical constructions
on topological spaces:

\begin{tenumerate}
	\item The \new{Quillen functor}
	$\lambda : \mathsf{Top}_{*,1} \longrightarrow
	\mathsf{Lie}\hy\Alg$ that assigns to a pointed
	$1$-connected $X$ a dg
	Lie algebra $\lambda(X)$
	that models the Lie algebra of
	homotopy groups $\pi_*(\Omega X)$. 
	\item The \new{Sullivan functor}
	$\Sull :  \mathsf{Top}  \longrightarrow
	\mathsf{Com}\hy\Alg$ that
	assigns to a space $X$ a 
	commutative dga 
	algebra $\Sull^*(X)$ that is a model
	of the cochain algebra $\Sing^*(X)$. 
	\item The \new{Adams--Hilton construction}
	$\mathcal{A} :  \mathsf{Top}_{*,1} \longrightarrow
	\mathsf{Ass}\hy\Alg$ that assigns to a pointed
	$1$-connected $X$ a dga
	algebra $\mathcal{A}_*(X)$ that is a model of the
	Pontryagin
	algebra of chains $\Sing_*(\Omega X)$.
\end{tenumerate}


In the following table we record some of the
relations between these algebraic models of
spaces and the three flavours of homology 
above. 

\bigskip

\begin{table}[h]
\centering
\begin{tabular}{@{} l *3c @{}}
\toprule 
& \multicolumn{3}{c}{Homology theory} \\
 \cmidrule{2-4}
 \multicolumn{1}{c}{Models}    & Ordinary   & Quillen   & Tangent   \\ 
\midrule
 Quillen  & $\pi_*(\Omega X)$ & $H_{*+1}(X)$ &  $\pi_*(LX)$ \\ 
 Sullivan--de Rham & $H^*(X)$ & $\pi_{*}(X)^\vee$ & $\pi_{*}(\FF(X,X))$ \\
 Adams--Hilton & $H_*(\Omega X)$ & $H_{*+1}(X) $ & $H_*(LX)$\\
 \bottomrule
 \end{tabular}
\end{table}

We point the reader to~\cite{BlockLazarev,
SchStash,BerglundSaleh} for useful references
where these relations are studied in detail. 
We now proceed to recall the last three
functors above and apply our operadic
formalism to obtain spectral sequences
to compute their tangent 
cohomology groups, which we will
identify with invariants of known
geometrical objects.

\subsection{The Adams-Hilton model}\label{sec:AdamsHilton} 

{{}} In~\cite{FMT} the authors exploit the
identification of Hochschild cohomology and the
homology of derivations of Remark~\ref{2.1.2} along with 
the identification of the loop homology of a closed
orientable manifold with the Hochschild cohomology
of $\Sing_*(\Omega X)$ to compute the loop bracket of~$X$. This is done under the hypothesis the 
identification of Cohen and Jones~\cite{CohenJones} 
between loop homology and Hochschild
cohomology of $\Sing^*(X)$ commutes with the bracket.
This has been confirmed in~\cite{MR2415345}, where the
stronger statement that the induced map is a morphism
of BV-algebras was proved.

{{}} Explicitly, one can compute Hochschild cohomology of $\Sing_*(\Omega X)$ 
through the Adams--Hilton model $\mathcal A_*(X)$ of $X$,
and then the loop space bracket as the Lie bracket
of derivations of its associated Lie algebra of
derivations $\Der(\mathcal A_*(X))$. 
Since the model $\mathcal A_*(X)$ has generators 
that are in bijection with the cells of a CW 
decomposition of the manifold $X$, this method lends itself
quite nicely to computations.
We now recall the 
construction of Adams and Hilton.

{{}} Let $X$ be a CW complex with exactly one $0$-cell,
no $1$-cells, and such that all the attaching maps of
higher dimensional cells are based with respect to 
this only $0$-cell. In~\cite{AH}, the authors construct
a cofibrant model $\mathcal A_*(X)$ of the dg algebra $\Sing_*(\Omega X)$,
where $\Omega X$ is the Moore loop-space of~$X$. In the following, for each
$n\in\NN$ write $X_n$ for the $n$-skeleton of $X$.

\begin{theorem*}[Adams--Hilton~\cite{AH}]
There is a cofibrant model  \[ f_X : \mathcal A_*(X)=(TV,d) \longrightarrow (\Sing_*(\Omega X),d)\] of the Pontryagin dga
algebra of $X$ such that for each non-negative integer $n$,
\begin{enumerate}[leftmargin = 1.5 cm, label = \emph{(L{\arabic*})}]
\setlength{\itemsep}{0pt}
  \setlength{\parskip}{0pt}
\item the space $V_n$ has basis the
$(n+1)$-cells of $X$, so that $V= V_{\geqslant 1}$,
\item the map $f_X$ restricts
to quasi-isomorphisms $\mathcal{A}_*(X_n)
\longrightarrow(\Sing_*(\Omega X_n), d)$,\item if $g : S^n\longrightarrow X_n$ is the attaching map of
a cell $e$ in $X$, then
\[ \left(f_{X}\right)_*\cls{dv_e} = K\cls {g},\] 
where $K$ is the isomorphism  
$\pi_n X_n \to \pi_{n-1}\Omega X_n$ followed
by the Hurewicz map. \hfill \qed
\end{enumerate} 
\end{theorem*}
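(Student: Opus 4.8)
The plan is to construct $\mathcal A_*(X)$ by induction along the skeletal filtration of $X$, building the comparison map $f_X$ simultaneously and verifying (L1)--(L3) together with the quasi-isomorphism property at each stage; the colimit over $n$ then produces the model for all of $X$, which is cofibrant since $(TV,d)$ is free as a graded algebra. Because $X$ has a single $0$-cell and no $1$-cells, the base case is trivial: $X_1=\ast$, the loop space $\Omega X_1$ is contractible, and I would set $\mathcal A_*(X_1)=\kk=T(0)$ with $f_{X_1}$ the canonical identification. For the inductive step, suppose $\mathcal A_*(X_n)=(TV_{<n},d)$ and a quasi-isomorphism $f_{X_n}$ onto $\Sing_*(\Omega X_n)$ have been built. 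Passing from $X_n$ to $X_{n+1}$ means attaching $(n+1)$-cells $e_\alpha$ along based maps $g_\alpha\colon S^n\to X_n$, and I would adjoin one free generator $v_{e_\alpha}$ in homological degree $n$ per cell. This records (L1) and explains why $V=V_{\geqslant 1}$: the unique $0$-cell contributes the unit and no generator, and the absence of $1$-cells forces $V_0=0$.

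The differential and the map on the new generators are then dictated by the attaching maps. For each $e_\alpha$, the class $K\cls{g_\alpha}\in H_{n-1}(\Omega X_n)$ --- the Hurewicz image of the adjoint of $g_\alpha$ under $\pi_n(X_n)\cong\pi_{n-1}(\Omega X_n)$ --- is, since $f_{X_n}$ is a quasi-isomorphism, the image of a unique class in $H_{n-1}(\mathcal A_*(X_n))$. Choosing a degree $n-1$ cycle $z_\alpha\in\mathcal A_*(X_n)$ representing it, I would set $dv_{e_\alpha}=z_\alpha$ and extend $d$ as a derivation; as $z_\alpha$ is a cycle this gives $d^2=0$, and (L3) holds by construction. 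To extend $f_X$ I must solve $d\,f_X(v_{e_\alpha})=f_{X_n}(z_\alpha)$ inside $\Sing_*(\Omega X_{n+1})$, which is possible exactly because attaching $e_\alpha$ nullifies $g_\alpha$: the class $\cls{g_\alpha}$ dies in $\pi_n(X_{n+1})$, hence its adjoint dies in $\pi_{n-1}(\Omega X_{n+1})$, so by naturality of the Hurewicz map $f_{X_n}(z_\alpha)$ represents $K\cls{g_\alpha}=0$ in $H_{n-1}(\Omega X_{n+1})$ and therefore admits a primitive there.

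It remains to show the extended map $f_{X_{n+1}}\colon\mathcal A_*(X_{n+1})\to\Sing_*(\Omega X_{n+1})$ is again a quasi-isomorphism, and this is the main obstacle: one must check that the free extension $\mathcal A_*(X_n)\cof\mathcal A_*(X_{n+1})$ on the algebraic side and the Pontryagin algebra of the loop space on the topological side grow compatibly under the cell attachment. I would control the topological side through the multiplicative homology Serre spectral sequence of the path-loop fibration $\Omega X_{n+1}\to PX_{n+1}\to X_{n+1}$, whose total space is contractible, reading off the effect of the new $(n+1)$-cells on $H_*(\Omega X_{n+1})$ from $H_*(X_{n+1})$ and the multiplicative structure, and showing that precisely the generators $V_n$ are introduced freely in the relevant degrees. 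Comparing this with the algebraic free extension via the long exact sequences of the pairs, and invoking the relative Hurewicz theorem together with the connectivity bounds supplied by the inductive hypothesis, then forces $f_{X_{n+1}}$ to be an isomorphism on homology; this is exactly (L2). Finally, since homology commutes with the filtered colimit over the skeleta, passing to the limit delivers the model $f_X$ with all three properties.
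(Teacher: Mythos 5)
The paper does not prove this statement: it is quoted verbatim from Adams--Hilton \cite{AH} and used as a black box, so there is no internal proof to compare against. Your reconstruction of the construction itself is the standard one and is essentially correct: induction over the skeleta, one free generator of degree $n$ per $(n+1)$-cell, $dv_{e_\alpha}$ a cycle representing $(f_{X_n})_*^{-1}K\cls{g_\alpha}$ (which gives (L1) and (L3) by fiat), and the extension of $f_X$ over the new generators using the fact that $\cls{g_\alpha}$ dies in $\pi_n(X_{n+1})$, hence $K\cls{g_\alpha}$ dies in $H_{n-1}(\Omega X_{n+1})$ and $f_{X_n}(z_\alpha)$ acquires a primitive. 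All of that is sound.

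The gap is in your treatment of (L2), which is the actual content of the theorem. You propose to ``read off the effect of the new $(n+1)$-cells on $H_*(\Omega X_{n+1})$'' from the Serre spectral sequence of the path--loop fibration and then match it against the free algebra extension. This does not close as stated: the homology of the loop space is not determined by $H_*(X_{n+1})$ together with acyclicity of the total space (the differentials and extension data of the spectral sequence depend on the attaching maps), so there is nothing to ``read off'' and compare generator by generator; and the relative Hurewicz theorem gives connectivity of the pair $(\Omega X_{n+1},\Omega X_n)$ but not the multiplicative statement you need. The argument that works --- and the one Adams--Hilton use, which the paper itself alludes to in the diagram $\mathcal A_*(X)\to\Cell_*(X)\otimes\mathcal A_*(X)\to\Cell_*(X)$ --- is to build the acyclic twisted tensor product $\Cell_*(X_{n+1})\otimes\mathcal A_*(X_{n+1})$ modelling $\Sing_*(PX_{n+1})$, filter both by the skeleta of the base, and invoke a comparison theorem (Moore/Zeeman) for the two filtered acyclic complexes: the induced map on base terms is an isomorphism and both total complexes are acyclic, so the map on fibre terms, i.e.\ $f_{X_{n+1}}$, is a quasi-isomorphism. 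You never need to compute $H_*(\Omega X_{n+1})$. Replacing your spectral-sequence bookkeeping by this comparison argument repairs the proof; the colimit step at the end is then fine since homology commutes with the filtered colimit over skeleta.
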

{{}} 
These conditions determine the dg algebra $\mathcal A_*(X)$
uniquely up to isomorphism, and we call it \new{the Adams--Hilton
model of $X$}. Of course, this model depends on the
CW structure of $X$, which we take as part of the input
data for its construction. As we
noted,
homology of the shift of indecomposables $s\ind \mathcal A_*(X)$ of the
Adams--Hilton model is the (reduced)  homology $H_*(X)$. In other words, $s\ind \,\mathcal A_*(X)$ is the reduced cellular chain
complex $\Cell_*(X)$ of $X$.

{{}} We remark that in~\cite{AH} the authors actually
produce a model of the fibration sequence of a CW complex $X$. That is, there is a commutative 
diagram of complexes of $\kk$-modules
$$
\begin{tikzcd}[row sep = large]
\mathcal A_*(X)  \arrow[r]\arrow[d] &
		 \Cell_*(X) \otimes \mathcal A_*(X) \arrow[r]\arrow[d] &
		 		\Cell_*(X) \arrow[d] \\
\Sing_*(\Omega X)  \arrow[r] &
			\Sing_*(LX) \arrow[r] &
					\Sing_*(X)  
\end{tikzcd}
$$
where all the vertical maps are quasi-isomorphisms,
the top row is the classical ``algebraic fibration'' 
coming from a cobar construction,
and the bottom row is obtained
by applying the singular
chains functor to the based
path-space
fibration. Moreover, the first
vertical map is a map of dga algebras
and 
the second vertical map is a map
of right modules with respect to
the obvious action of $\mathcal{A}_*(X)$
on $\Cell_*(X)\otimes \mathcal A_*(X)$
and the action of $\Omega X$ on $LX$. 
The Adams-Hilton model 
and, more generally, non-commutative
dg algebra models have proven successful
in the study of the $p$-local homotopy theory of spaces, for example; see the
article~\cite{AnickSpheres} and the
book~\cite{AnickBook}.

{{}} We now show how to
run our spectral sequence to
compute loop space homology of a space~$X$ by using
the filtration by degree in the Adams--Hilton model. In doing so, we obtain the 
following result, which the reader can 
compare with the spectral sequence of 
S. Shamir~\cite{Shamir} and to
the eponymous spectral sequence of 
J.-P. Serre. We remark 
that our spectral sequence has 
differentials that we can control provided we can
control the Adams--Hilton model. This gives us
a spectral sequence of the same shape as that
in~\cite{Cohenloops}, produced by R. L. Cohen, 
J. D. S. Jones and J. Yan. 

\begin{theorem}\label{thm:Adams-Hilton}
Let $X$ be a CW complex with exactly one $0$-cell, no $1$-cells
and all whose attaching maps are based with respect to the only
$0$-cell. There is a convergent 
first quadrant spectral sequence with
\[ E_2^{s,t}  = \hom(H_s(X), H_t(\Omega X)) 
\;			\xRightarrow{\phantom{m}s\phantom{m}} \;
					H_{s+t}(LX),
		 		\]
\end{theorem}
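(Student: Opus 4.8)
The plan is to specialize Corollary~\ref{cor:AQss} to the associative operad $\PP=\mathsf{As}$, taking as our tower of cofibrations the skeletal filtration of the Adams--Hilton model $\mathcal A_*(X)=(TV,d)$, and then to translate the resulting algebraic spectral sequence into topological terms using the Adams--Hilton theorem and Remark~\ref{2.1.2}. First I would set $A=\mathcal A_*(X)$ with its filtration $A_s=TV_{\leqslant s-1}$, so that by property (L1) the subalgebra $A_{s+1}$ is generated by the $(s+1)$-cells of $X$; since $V=V_{\geqslant 1}$, this is a cellular tower in the sense of the definition, with each $A_s\cof A_{s+1}$ elementary of height $s-1$. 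The target module is $U=A$ itself (mapped into $\Sing_*(\Omega X)$ by $f_X$), and I would apply the spectral sequence of Theorem~\ref{thm:SucesionEspectral} with the convention change $(s,t)\mapsto(s,-t)$ from the start of the section, so that differentials run $(r,r-1)$ and the $E_\infty$-page recovers the associated graded of the cohomology of $\Der_{A_0}(A,A)$.

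Next I would identify the pages. On $E_1$, the computation in the proof of Corollary~\ref{cor:paginaE2} gives $E_1^{s,t}=\hom(V_s,H_{t}(U))$ with differential $\hom(d_{(1)},1)$ induced by the linear part of $d$. By property (L3) and the remarks following the Adams--Hilton theorem, $s\ind\mathcal A_*(X)=\Cell_*(X)$ computes reduced homology, so the homology of $(V_\bullet,d_{(1)})$ is $H_*(X)$ (with the appropriate shift built into the cellular indexing $V_n\leftrightarrow (n+1)$-cells). Taking homology of $E_1$ then yields $E_2^{s,t}=\hom(H_s(X),H_t(\Omega X))$, using that $H_t(U)=H_t(\mathcal A_*(X))=H_t(\Sing_*(\Omega X))=H_t(\Omega X)$ via $f_X$. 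The sign convention and the $\hom(d_{(1)},1)$ differential dualize the cellular chain differential, which is why $H_s(X)$ appears in the first slot.

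For the target, the key input is Remark~\ref{2.1.2} together with the string-topology identification used in~\cite{FMT}: the cohomology of $\Der_{A_0}(A,A)$ is, up to the degree shift recorded in the remark, the Hochschild cohomology $\HH^*(\mathcal A_*(X))$, and under the Cohen--Jones identification~\cite{CohenJones} this is the loop homology $H_{*+\dim}(LX)$ for a closed oriented manifold; in the general CW setting one identifies the target as $H_{s+t}(LX)$ directly via the Adams--Hilton model of the free loop fibration displayed after the theorem. I would use the module-map row $\Cell_*(X)\otimes\mathcal A_*(X)\to\Sing_*(LX)$ to make this identification precise and functorial. Finally, first-quadrant vanishing and hence strong convergence follow from Corollary~\ref{cor:CollapseResults}: here $U=\mathcal A_*(X)$ satisfies $U=U_{\geqslant 0}$, giving $E_1^{s,t}=0$ outside the right half-plane, and after the convention change this lands in the first quadrant.

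The hard part will be the precise identification of the target of the spectral sequence with $H_{s+t}(LX)$, rather than merely with $\H^*(\Der_{A_0}(A,A))$. The cohomology of derivations agrees with Hochschild cohomology only after passing to the cone $\cone(\Ad_Q)$ of Proposition~\ref{prop:HHcone}, so I must check that the low-degree discrepancy recorded in Remark~\ref{2.1.2} does not disturb convergence and that the degree shifts in the Cohen--Jones isomorphism line up with the bidegree convention $(\operatorname{gr}H)^p=\bigoplus_{s=p+t}E_\infty^{s,t}$. Keeping the indexing consistent between the cellular grading of $V$, the homological grading of $H_*(\Omega X)$, and the total degree of $H_*(LX)$ is where the real bookkeeping lies; the rest is a direct application of the machinery already established.
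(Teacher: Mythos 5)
Your route is the paper's route (Corollary~\ref{cor:paginaE2} applied to the cellular filtration of $\mathcal A_*(X)$, then the Cohen--Jones identification for the target), and the page identifications you give are correct as far as they go. However, the one step you set aside as ``bookkeeping'' is precisely the only substantive point the paper's proof has to address beyond citing Corollary~\ref{cor:paginaE2}, and you have not closed it. Applying that corollary to $\Der(\mathcal A_*(X))$ gives $E_2^{s,t}=\hom(H_s(\mathsf{As},\mathcal A_*(X)),H_t(\Omega X))=\hom(H_{s+1}(X),H_t(\Omega X))$: the column $\hom(H_0(X),H_t(\Omega X))\cong H_t(\Omega X)$ is absent and the whole page is off by one from the stated $E_2$-term. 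The paper's fix is concrete: replace $\Der(\mathcal A_*(X))$ by $\cone(\Ad)$ as in Proposition~\ref{prop:HHcone} and Remark~\ref{2.1.2}, and filter the cone by placing the algebra summand $\mathcal A_*(X)$ entirely in filtration degree $0$ while the derivation summand carries the cellular filtration; this inserts exactly the missing $s=0$ column $H_*(\Omega X)=\hom(H_0(X),H_*(\Omega X))$ and produces the reindexing from $H_{s+1}(X)$ to $H_s(X)$, after which the target is the Hochschild cohomology $\HH^*(\mathcal A_*(X))\cong\HH^*(\Sing_*(\Omega X))\cong H_*(LX)$ by \cite{CohenJones}. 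Without this modification of the filtered object, the theorem as stated does not follow.

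One further caution: your proposed alternative of identifying the target via the row $\Cell_*(X)\otimes\mathcal A_*(X)\to\Sing_*(LX)$ of the Adams--Hilton fibration diagram does not match the object whose cohomology the spectral sequence computes. That object is a cone on a map into a complex of \emph{derivations} (a $\hom$-type construction), not the tensor product $\Cell_*(X)\otimes\mathcal A_*(X)$, so this comparison would require a separate argument; the paper avoids it entirely by invoking the Cohen--Jones and Loday results directly.
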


\begin{proof}
Since $X$ is simply connected, the
Adams--Hilton model of $X$ is a
$0$-connected dga algebra, and then
Corollary~\ref{cor:paginaE2} applies.
We already noted that 
the Quillen homology of $\mathcal{A}_*(X)$
is $H_{*+1}(X)$, while
its homology is $H_*(\Omega X)$.

To conclude, we need only address 
the fact we have replaced the
positive homology groups of $X$ 
with the homology groups of $X$,
and the
identification of the
target of our spectral sequence.
To do the first, we recall from 
Proposition~\ref{prop:HHcone} that we may 
pass from tangent cohomology
to Hochschild cohomology
of the dga algebra $\mathcal A_*(X)$
by taking the cone of the adjoint
map 
\[ \Ad : \mathcal A_*(X)\longrightarrow 
	\Der\mathcal A_*(X). \]
We then filter the cone by only filtering the summand corresponding to
derivations. This has little effect
in our spectral sequence, except that it
adds the summand corresponding to
the homology of $\mathcal A_*(X)$,
namely 
\[ H_*(\Omega X) = \hom(H_0(X),H_*(\Omega X)), \]
in the second page and throughout
the computation, and produces the
desired shift (there is a shift in the cone).

Having addressed
this, we now recall from~\cite{LodayLoops}*{Chapter 4} 
and \cite{CohenJones} that the Hochschild
cohomology groups of the Pontryagin
algebra $\Sing_*(\Omega X)$ of $X$
are functorially isomorphic to
the homology groups of the free
loop space $LX = \operatorname{Map}(S^1,X)$ of~$X$. 
\end{proof}
{{}} 

\begin{rmk}
{{}} Observe that if $\AA_*(X)$ 
is minimal or, what is the same, if the linear 
part of its differential is 
zero, then the description of the $E_2$-page is greatly 
simplified. If there are only cells in even degree or 
only in odd degree, then $\AA_*(X)$ 
will be minimal, for 
example. 
\end{rmk}

\subsection{Multiplicative structure}

For $X$  a simply connected
closed oriented manifold of dimension $m$, write
$\mathbb H_*(LX) = H_{*+m}(LX)$. 
 We recall
from~\cite{String} that there is a
\emph{loop product}
$ -\bullet- :\mathbb H_*(LX)\otimes
	\mathbb H_*(LX) \longrightarrow \mathbb H_*(LX)$. 
	One  of the main results of that
	paper is as follows:
	\begin{theorem*} For every simply  connected closed oriented
	manifold $X$, there is a natural isomorphism of graded  
	commutative associative
	algebras
	\[ (\mathbb H_*(LX),-\bullet -)
		\longrightarrow (\HH^*(\Sing^*(X)),-\smile -). \]
	\end{theorem*}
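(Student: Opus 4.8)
The plan is to prove the statement in two stages: first I would construct a degree-preserving linear isomorphism between $\mathbb H_*(LX)$ and $\HH^*(\Sing^*(X))$, and then upgrade it to an isomorphism of algebras by identifying the loop product with the cup product. For the linear isomorphism I would begin with the cosimplicial model of the free loop space: writing $S^1$ as the realization of $\Delta^1/\partial\Delta^1$ exhibits $LX$ as a totalization of the cosimplicial space $[n]\mapsto X^{n+1}$, so that applying singular cochains yields the cyclic bar construction on the dg algebra $\Sing^*(X)$, whose homology is by definition its Hochschild homology. This is Jones's isomorphism $H^*(LX)\cong \HH_*(\Sing^*(X))$. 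Because $X$ is a closed oriented manifold of dimension $m$, Poincar\'e duality endows $\Sing^*(X)$ with the structure of a homotopy symmetric Frobenius algebra, and the resulting quasi-isomorphism of bimodules $\Sing^*(X)\simeq \Sing^*(X)^\vee$ of degree $m$ converts Hochschild homology into Hochschild cohomology with a shift, producing the isomorphism $\mathbb H_*(LX)=H_{*+m}(LX)\cong \HH^*(\Sing^*(X))$.

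To match the products I would compare both rings to a single intermediate object, following \cite{CohenJones}. The Chas--Sullivan loop product $-\bullet-$ is defined by a Pontryagin--Thom construction: one restricts to the subspace of composable pairs of loops, which is the pullback of $LX\times LX$ along the diagonal $X\to X\times X$, applies the Thom collapse for the normal bundle of the diagonal, and then concatenates. On the algebraic side, $-\smile-$ is the ordinary associative product on Hochschild cohomology, which is graded commutative by virtue of the Gerstenhaber structure, so both sides are graded commutative as claimed. The comparison is effected by realizing $\mathbb H_*(LX)$ as the homotopy of the Thom spectrum $LX^{-TX}$ and checking that the multiplication induced by the figure-eight collapse corresponds, under the Jones isomorphism threaded through the cyclic model, to the Hochschild cup product.

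The hard part will be precisely this last identification of products. The linear isomorphism is essentially formal once the cyclic model and Poincar\'e duality are in place, but verifying that the geometric, intersection-theoretic loop product agrees with the algebraic cup product requires propagating the Pontryagin--Thom collapse through the cosimplicial model at the chain level, while keeping careful track of the Thom class, the degree shift by $m$, and the Frobenius pairing that implements Poincar\'e duality. Reconciling the signs and the homotopy commutativity of $\Sing^*(X)$ in this step is the technical core of the argument, and it is also where naturality in $X$ must be checked.
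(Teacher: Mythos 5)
A preliminary remark: the paper does not prove this statement at all. It is recalled verbatim from the string--topology literature (the loop product is from \cite{String}, and the identification with Hochschild cohomology is due to Cohen and Jones \cite{CohenJones}); in the paper it serves only as an input to Theorem~\ref{thm:Adams-Hilton} and Theorem~\ref{thm:loopproduct}. So there is no in-paper argument to compare yours against, and I can only assess your sketch on its own terms.

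Your outline is the standard Cohen--Jones strategy, and the individual ingredients are correctly identified: Jones's cosimplicial model giving $H^*(LX)\cong \HH_*(\Sing^*(X))$ for simply connected $X$, derived Poincar\'e duality converting $\HH_*$ into $\HH^*$ with a shift by $m$, and the Thom spectrum $LX^{-TX}$ as the intermediary for comparing the two products. But as written this is a road map rather than a proof: the step you yourself flag as ``the technical core'' --- propagating the Pontryagin--Thom collapse through the cosimplicial model at the chain level and matching it with the Hochschild cup product --- is exactly the content of the theorem, and it is not carried out. Two further points deserve attention if you pursue this. First, the assertion that Poincar\'e duality yields a quasi-isomorphism of $\Sing^*(X)$-bimodules between $\Sing^*(X)$ and its dual, of degree $m$, is itself a theorem requiring proof: cap product with a fundamental cycle is not a bimodule map on the nose, only up to coherent homotopy, so one needs either a Poincar\'e duality model of $X$ (which does exist over $\Q$ for simply connected closed manifolds) or a properly derived formulation of this self-duality. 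Second, the comparison of the intersection-theoretic product with the algebraic cup product is notoriously delicate with respect to signs and the multiplicative structure on the cosimplicial Thom spectrum; a complete write-up must address these issues explicitly, which is precisely why the present paper imports the statement from the literature rather than reproving it.
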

	
Following the notation of Section~\ref{sec:AdamsHilton}, let
$\AA_*(X) = (TV,d)$ be an Adams--Hilton model for $X$, so that the cone
of the adjoint map of $\AA_*(X)$ computes $\mathbb H_*(LX)$. In our spectral
sequence, the $E_2$-page has the form
\[ E_2^{s,t} = \hom(H_s(X),H_t(\Omega X)) \]
and, since $H_*(X)$ is a coassociative coalgebra and $H_*(\Omega X)$
an associative algebra, this page inherits an associative 
\new{convolution product} given by the following composite:
\[ H_*(X) \stackrel{\Delta}\longrightarrow H_*(X)\otimes H_*(X) 
	 \stackrel{f\otimes g}\longrightarrow H_*(\Omega X)\otimes H_*(\Omega X)
	  	\stackrel{\mu}\longrightarrow H_*(\Omega X).\]
Observe that in case we consider elements with domain in $H_0(X)=\mathbb Q$,
this identifies with the Pontryagin product of $H_*(\Omega X)$.	
We remark that this is in line with Theorem 1 in~\cite{Cohenloops},
and also point the reader to~\cite{Meier2011}.

\begin{theorem}\label{thm:loopproduct}
The spectral sequence of Theorem~\ref{thm:Adams-Hilton} is multiplicative.
The convolution product on the $E_2$-page converges to the cup product
on the Hochschild cohomology of $\AA_*(X)$, which equals the Chas--Sullivan
product in the loop homology groups of $X$.  
\end{theorem}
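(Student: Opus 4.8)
The plan is to realize the spectral sequence of Theorem~\ref{thm:Adams-Hilton} as the one associated to a \emph{multiplicative} filtration on a dg algebra computing $\HH^*(\AA_*(X))$ together with its cup product. Rather than work directly with the cone of the adjoint map---whose natural structure is the Gerstenhaber \emph{bracket} and not the cup product---I would pass to the convolution model $\hom_\tau(C,\AA_*(X))$ described before the corollary, where $C$ is a coalgebra model of the bar construction $B\AA_*(X)$. Since $\AA_*(X)$ models $\Sing_*(\Omega X)$ and $B\Sing_*(\Omega X)$ models $\Sing_*(X)$, the homology of $C$ is $H_*(X)$, while the homology of the target $\AA_*(X)$ is $H_*(\Omega X)$. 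This convolution complex is a dg algebra under the product $f\ast g=\mu\,(f\otimes g)\,\Delta$ built from the coproduct $\Delta$ of $C$ and the Pontryagin product $\mu$ of $\AA_*(X)$, and this product is the cup product on $\HH^*(\AA_*(X))$. The first task is to check that $\hom_\tau(C,\AA_*(X))$, filtered by the coradical (equivalently, cellular) filtration of $C$, is filtered quasi-isomorphic to the filtered complex underlying the spectral sequence of Theorem~\ref{thm:Adams-Hilton}, so that the two agree from the $E_2$-page on; this is where the cone shift appearing in that proof must be accounted for.

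Granting this, multiplicativity is formal. Because the diagonal of $X$ is cellular, the coproduct of $C$ respects the filtration, $\Delta(F_nC)\subseteq\sum_{p+q\leqslant n}F_pC\otimes F_qC$, so if $f$ vanishes below filtration $s$ and $g$ below filtration $s'$ then $f\ast g$ vanishes below filtration $s+s'$. Hence $F^s\cdot F^{s'}\subseteq F^{s+s'}$ and the filtration is multiplicative. A multiplicative filtration on a dg algebra produces a multiplicative spectral sequence, each page being a bigraded algebra on which $d_r$ is a derivation; see~\cite{McCleary}. This proves the first assertion of the theorem.

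Next I would identify the induced product on the second page. Passing to the associated graded and taking homology, the product on $E_2^{s,t}=\hom(H_s(X),H_t(\Omega X))$ is the one induced by $\ast$, namely comultiplication along the homology diagonal of $X$ followed by the two cochains and then the Pontryagin product on $H_*(\Omega X)$. This is exactly the convolution product described before the statement; on the bottom row, where the domain is $H_0(X)=\Q$, it restricts to the Pontryagin product, in agreement with the earlier remark. Finally, the product on the abutment is the cup product on $\HH^*(\AA_*(X))$, and by the theorem of Cohen--Jones recalled above~\cite{CohenJones}, together with the identification $\AA_*(X)\simeq\Sing_*(\Omega X)$, this cup product corresponds to the Chas--Sullivan loop product $-\bullet-$ on $\mathbb H_*(LX)$. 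Since the spectral sequence is multiplicative with $E_2$-product the convolution product and abutment product the loop product, the convolution product converges to the loop product, as claimed.

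The main obstacle I expect is the first step: reconciling the associative, convolution-theoretic model---on which both the cup product and the multiplicativity of the filtration are transparent---with the derivation-and-cone model used to build the spectral sequence in Theorem~\ref{thm:Adams-Hilton}. Concretely, one must produce a filtered quasi-isomorphism between $\hom_\tau(C,\AA_*(X))$ and the filtered cone of $\Ad$, tracking the degree shift of the cone, and verify that under the identification $H^*(\cone(\Ad))\cong\HH^*(\AA_*(X))$ the convolution product really is the Gerstenhaber cup product rather than merely inducing the bracket. Once this compatibility is in place, the remaining steps are routine.
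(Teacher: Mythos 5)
Your overall strategy is viable and lands where the paper does, but the paper takes a shorter route that avoids precisely the comparison you flag as the main obstacle: it never leaves the derivation model. Instead of transporting the filtration to a convolution algebra $\hom_\tau(C,\AA_*(X))$, the paper uses the known description of the cup product \emph{directly on} $\Der(\AA_*(X))$ via the brace operation: for $f,g\colon V\to TV$ corresponding to derivations $F,G$, the product $F\smile G$ is the derivation induced by $\{d;f,g\}=d_2(f,g)+d_3(f,g,1)+d_3(f,1,g)+d_3(1,f,g)+\cdots$, that is, all insertions of $f$ and $g$ into the components $d_k\colon V\to V^{\otimes k}$ of the Adams--Hilton differential. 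Multiplicativity of the cellular filtration is then immediate, since every term of the brace respects it, and on the associated graded only the quadratic term $d_2(f,g)=\mu(f\otimes g)\Delta$ survives; this is the convolution product on $E_2$. The identification of the abutment product with the Chas--Sullivan product is quoted from Cohen--Jones, exactly as in your last step.

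One point in your write-up would fail as stated, and it is worth naming because it is the crux. You must choose which coalgebra $C$ you use. If $C=B\AA_*(X)$ is the genuine bar construction, the convolution product is strictly $\mu(f\otimes g)\Delta$ and does compute the cup product, but the coradical (word-length) filtration yields an $E_1$ of the form $\hom\bigl((s\overline{\AA_*(X)})^{\otimes s},\AA_*(X)\bigr)$, not $\hom(H_s(X),H_t(\Omega X))$: this is not the spectral sequence of Theorem~\ref{thm:Adams-Hilton}. To match that spectral sequence you must take the small cellular model $C\simeq\kk\oplus s\Cell_*(X)$, whose convolution complex does identify with $\cone(\Ad)$ and whose cellular filtration is the right one --- but this $C$ is only an $A_\infty$-coalgebra, so the naive binary convolution $\mu(f\otimes g)\Delta_2$ is neither a chain-level model of the cup product nor associative on the nose; the higher coproducts $\Delta_k$, dual to the $d_k$ above, must be included. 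These corrections are exactly the higher brace terms in the paper's formula; they respect the filtration and vanish on the associated graded, which is why the $E_2$-product is nonetheless the convolution product. Replacing ``the convolution product on $\hom_\tau(C,\AA_*(X))$'' by this full $A_\infty$-convolution (brace) product repairs your argument, at which point it coincides with the paper's proof.
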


\begin{proof}	
The cup product on $\HH^*(\AA_*(X))$ is induced
from the differential $d$ of $\Der(\AA_*(X))$ as follows.
Let $f$ and $g$ be linear maps $V\longrightarrow TV$
which correspond to derivations $F$ and $G$. Then
$F\smile G$ is determined by the derivation induced
from the brace operation $\{d;f,g\}$~\cite{mescher2016primer,HinichTamarkin}.
Explicitly, this is obtained
by all possible ways of inserting $f$ and $g$ (in this order)
into the operators $d=d_2+d_3+d_4+\cdots$. The
first few terms are as follows:
\[ \{d;f,g\} = d_2(f,g) + d_3(f,g,1) + d_3(f,1,g) + d_3(1,f,g)+ \cdots.\]
Using the filtration of Theorem~\ref{thm:Adams-Hilton}
on the $E_2$-page
we are left only with the term induced in homology by the
derivation associated to $d_2(f,g)$, which on $V$ restricts
precisely to $\mu\circ (f\otimes g)\circ\Delta$, which is
what we wanted.
\end{proof}

We point out a similar description
for a Lie bracket in the cohomology
groups of derivations of Sullivan
models of spaces through
brace operations is given
in~\cite{BuijsMurillo}*{Theorem 3}.

\subsection{The Sullivan model} \label{sec:Sullivan}

{{}} The Sullivan model of a topological space has proved to be 
quite successful and versatile in studying rational 
homotopy theory and its connection to other fields.
We refer the reader to \cite{RHT,AlgebraicModelsGeom}
for a survey on the various results obtained through this formalism.
In this section, we study the spectral sequence 
of Theorem~\ref{thm:SucesionEspectral} in this context, that
is, when considering cofibrant
commutative dga algebras that model the rational
homotopy type of topological spaces
and fibrations between these.

\medskip

\textit{\textcolor{newcol}{The cohomological conventions.}}
As a chain algebra, a Sullivan algebra 
is concentrated in non-positive degrees, and the
complexes of derivations
of~\cite{Fel10}, which we will be using,
are  the same as ours.  Since we intend to apply
Corollary~\ref{cor:CollapseResults} to Sullivan algebras, which are
naturally cohomologically graded and concentrated 
in non-negative degrees, we substitute 
the ``truncated'' condition in Section \ref{sec: model categories}
by the following. 

\begin{definition}
Let $b\in\NN$. A $\PP$-algebra $A$ is 
\new{$b$-truncated} if $H^p(A)$
vanishes for all $p\geq b+1$. 
\end{definition}

For example, an $n$-manifold has an 
$n$-truncated Sullivan model. Using these cohomological conventions,
the items of Lemma~\ref{lemma:VanishingLemmas} read as
follows:

\begin{lemma} Let $U$ be a cohomologically graded $\PP$-algebra.
\begin{tenumerate}
	\item If $U=U^{\geq k}$ for 
	some $k$, then $\Der_B^p(A,U) = 0 $ 
	for all $p \leq k-s-2,$
	\item If $U$ is $b$-truncated for some $b$, then 
	$H^p(\Der_B(A,U)) = 0$ for all
	$p\geq b-s$.
\end{tenumerate}
\end{lemma}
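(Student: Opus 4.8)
The plan is to re-run the argument proving Lemma~\ref{lemma:VanishingLemmas} in the cohomological conventions, the only change being that the degrees of the coefficient module are now read upwards. First I would record the one structural fact that drives everything: since $B\cof A$ is elementary of height $s$, we may write $A = B\star\PP(V)$ with $V$ concentrated in cohomological degree $s+1$ and $d(V)\subseteq B$, so that any $F\in\Der_B(A,U)$ is completely determined by its restriction to $V$. If $F$ has cohomological degree $p$, then it raises degrees by $p$, and hence its image on the generators satisfies $F(V)\subseteq U^{s+1+p}$. This single observation reduces both statements to elementary degree bookkeeping.

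For the first item, suppose $U=U^{\geq k}$, so that $U^j=0$ whenever $j<k$. Then $F(V)\subseteq U^{s+1+p}$ is forced to vanish as soon as $s+1+p<k$, that is, whenever $p\leq k-s-2$; since $F$ is determined by its values on $V$, it follows that $\Der_B^p(A,U)=0$ in exactly that range, as claimed.

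For the second item, suppose $U$ is $b$-truncated and take a cocycle $F\in\Der_B^p(A,U)$ with $p\geq b-s$; I would produce a primitive $G$ with $\partial G=F$. For a generator $v\in V$, the relation $dv\in B$ together with the vanishing of $F$ on $B$ turns the cocycle condition $\partial F=0$ into $d_U F(v)=0$, so $F(v)$ is a cocycle. Now $F(v)$ lies in $U^{s+1+p}$ with $s+1+p\geq b+1$, a range in which the cohomology of $U$ vanishes, so $F(v)=d_U G(v)$ for some $G(v)\in U^{s+p}$. Declaring $G$ to vanish on $B$ and to send $v\mapsto G(v)$ defines $G\in\Der_B^{p-1}(A,U)$, and since $dv\in B$ again annihilates the second term of $\partial G$ on generators, $\partial G$ and $F$ are $\Der_B$-derivations that agree on $V$, hence on all of $A$.

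The argument is essentially free of difficulty once the grading dictionary is fixed; the only thing requiring care is the index arithmetic, namely confirming that reading a derivation of degree $p$ as raising cohomological degree produces precisely the stated thresholds $k-s-2$ and $b-s$, and checking that the primitive $G$ built on generators really is a well-defined derivation vanishing on $B$ rather than merely a linear map. I would therefore treat the bookkeeping of signs and degrees as the main (and only) obstacle, and verify it explicitly before writing down the two bounds.
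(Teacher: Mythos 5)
Your proposal is correct and follows exactly the route the paper intends: the cohomological statement is just Lemma~\ref{lemma:VanishingLemmas} with the grading reversed, and your degree bookkeeping (generators of an elementary cofibration of height $s$ sitting in cohomological degree $s+1$, a degree-$p$ derivation raising cohomological degree by $p$, hence $F(V)\subseteq U^{s+1+p}$) reproduces precisely the thresholds $p\leq k-s-2$ and $p\geq b-s$, and your construction of the primitive $G$ on generators is the same argument as in the paper's proof of the homological version.
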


Similarly, the items of
	Corollary~\ref{cor:CollapseResults}  read as follows,
	where again $t$ is replaced by $-t$:

\begin{lemma}Let $U$ be a cohomologically graded $\PP$-algebra.
	\begin{tenumerate}
		\item If $U=U^{\geq k}$ for some $k$, 
		then $E_1^{s,t}=0$ for all $2s\leq k+t-1$.
		\item If $U$ is $b$-truncated for some $b$, 
		then $E_1^{s,t}=0$ for all $2s\geq t+b+1.$
	\end{tenumerate}
\end{lemma}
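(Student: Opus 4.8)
The plan is to run the proof of Corollary~\ref{cor:CollapseResults} essentially verbatim, transporting every index through the cohomological conventions fixed at the start of Section~\ref{sec:AplicacionesRacionales}. Under the substitution $(s,t)\mapsto(s,-t)$, the first page of the spectral sequence of Theorem~\ref{thm:SucesionEspectral} becomes $E_1^{s,t}=H^{s-t}(\Der_{A_s}(A_{s+1},U))$, so the whole task reduces to locating, in terms of $s$ and $t$, the cohomological degree $p=s-t$ at which this group is forced to vanish. Since the final statement is the cohomological reading of Corollary~\ref{cor:CollapseResults}, I take the same hypotheses: $A$ is the colimit of a cellular tower of cofibrations and $u:A\longrightarrow U$ a map of $\PP$-algebras.

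First I would recall that in a cellular tower each $A_s\cof A_{s+1}$ is elementary of height $s-1$; this is the only place the cellular hypothesis is used, exactly as in Corollary~\ref{cor:CollapseResults}. Feeding the height $s-1$ into the preceding cohomological form of Lemma~\ref{lemma:VanishingLemmas} then yields both vanishing ranges for $\Der_{A_s}(A_{s+1},U)$ directly. For the first item, that lemma gives $\Der^p_{A_s}(A_{s+1},U)=0$ whenever $p\leq k-(s-1)-2=k-s-1$; setting $p=s-t$ turns this into $2s\leq k+t-1$. For the second item it gives $H^p(\Der_{A_s}(A_{s+1},U))=0$ whenever $p\geq b-(s-1)=b-s+1$, and with $p=s-t$ this is $2s\geq t+b+1$. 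In both cases $E_1^{s,t}=H^{s-t}(\Der_{A_s}(A_{s+1},U))$ vanishes precisely on the claimed half-plane.

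The one genuine pitfall is the overloading of the symbol $s$, which plays three roles at once: the filtration degree of the page, the index of the cofibration $A_s\cof A_{s+1}$, and, shifted by one, the height fed into the vanishing lemma. I would therefore be explicit that the height is $s-1$ rather than $s$, since that shift by $-1$ is exactly what produces the coefficient $2s$ in the final inequalities instead of $2s$ offset by a different constant. Beyond this bookkeeping the argument is pure index chasing; there is no collapse statement to verify here, as the lemma asserts only the two vanishing lines.
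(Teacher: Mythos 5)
Your proof is correct and is exactly the index chase the paper leaves implicit: it applies the cohomological form of Lemma~\ref{lemma:VanishingLemmas} with height $s-1$ (the cellular hypothesis) and substitutes $p=s-t$ after the bidegree change $(s,t)\mapsto(s,-t)$, recovering both inequalities $2s\leq k+t-1$ and $2s\geq t+b+1$. This matches the paper's intended derivation, which is stated without proof as the cohomological reading of Corollary~\ref{cor:CollapseResults}.
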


Since Sullivan algebras are concentrated 
	in non-negative 
	degrees, we will have that $E_1^{s,t}=0$ whenever
	$t\geq 2s+1$. 
	If moreover $X$ is 
	an $n$-manifold, or any space whose rational cohomology 
	is concentrated in degrees $\leq n$, then we 
	have the sharper vanishing of $E_1^{s,t}$ 
	whenever $t+n+1 \leq 2s$.

\bigskip
{{}} Let us fix a fibration $p:E\longrightarrow B$, where $E,B$ are 
1-connected CW-complexes and $E$ is finite, and let us take
 $(\Lambda W,d) \cof (\Lambda W\otimes \Lambda V,D)$
 a
relative Sullivan model of this fibration. In other words, 
the following diagram of cdga algebras commutes and the vertical maps
are quasi-isomorphisms of cdga algebras:
\begin{center}
	\begin{tikzcd}[row sep = large]
	\Sull(B) \arrow[r, " \Sull(p)"]                                 & \Sull(E)                                                       \\
	{(\Lambda W,d)  } \arrow[r, hook] \arrow[u, "\simeq"] & {(\Lambda W\otimes \Lambda V,D)} \arrow[u, "\simeq"].
	\end{tikzcd}
\end{center} 
The main result in~\cite{Fel10} 
shows that this cofibration codifies
the homotopy type of $\Aut_1(p)$, the  
component of the identity in the topological monoid $\Aut(p)$ of fibre-homotopy self equivalences $f: E\longrightarrow E$. We recall this means
that $f$ is a homotopy equivalence
such that $pf=p$. 

\begin{theorem*}[Theorem 1 in~\cite{Fel10}]
There is an isomorphism of graded Lie algebras
\begin{equation*}
H^*(\Der_{\Lambda W}(\Lambda W\otimes \Lambda V))
	 \longrightarrow 
	 	\pi_*(\Aut_1(p)). 
\end{equation*} 
\end{theorem*}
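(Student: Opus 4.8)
The plan is to realize $\Aut_1(p)$ rationally as a grouplike topological monoid of fibre-preserving self-equivalences and to compute its homotopy Lie algebra through the dg Lie algebra of relative derivations of the given model. First I would reduce the geometry to a mapping space: the monoid $\Aut(p)$ sits inside the space $\operatorname{map}_B(E,E)$ of self-maps of $E$ covering $\id_B$, and $\Aut_1(p)$ is exactly the path component of $\id_E$ in $\operatorname{map}_B(E,E)$. Since every self-map in that component is automatically a fibre-homotopy equivalence, $\Aut_1(p)$ is a grouplike monoid under composition, and its homotopy groups carry a graded Lie algebra structure via the Samelson bracket. The key point is that $\operatorname{map}_B(E,E)$ is the space of sections of the fibrewise mapping fibration over $B$, so its rational homotopy theory is governed by maps of $\Sull$-models taken relative to the base.

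Next I would set up the algebraic side. Using the relative Sullivan model $(\Lambda W,d)\hookrightarrow(\Lambda W\otimes\Lambda V,D)$, the first-order fibrewise deformations of $\id_E$ ---that is, the tangent vectors at the identity of $\operatorname{map}_B(E,E)$--- are precisely the derivations of $\Lambda W\otimes\Lambda V$ that vanish on $\Lambda W$, namely $L:=\Der_{\Lambda W}(\Lambda W\otimes\Lambda V)$, equipped with the differential $\theta\mapsto[D,\theta]$ and the commutator bracket. Invoking the standard rational models for section and mapping spaces (in the spirit of Haefliger, Sullivan and Brown--Szczarba) together with the realization functor for finite-type nilpotent dg Lie algebras (Tanr\'e, Schlessinger--Stasheff, or the Deligne--Getzler--Hinich nerve), one has that $\pi_*(\langle L\rangle)$ recovers the homology of $L$ with the bracket induced from that of $L$. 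Finiteness of $E$ is what guarantees that $L$ is of finite type, so that this realization genuinely models the rationalization of $\Aut_1(p)$; composing these identifications yields the graded isomorphism $H^*(L)\xrightarrow{\;\sim\;}\pi_*(\Aut_1(p))$ asserted in the statement.

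The hard part will be matching the two Lie structures rather than the underlying graded vector spaces. Producing the linear isomorphism is essentially the tangent-space computation above, but one must verify that the Samelson bracket on $\pi_*(\Aut_1(p))$ induced by composition in the monoid corresponds exactly to the commutator bracket of derivations on $H^*(L)$, and that the degree and shift conventions relating cohomological derivation degree to homotopical degree line up so the two brackets agree up to sign rather than merely up to a regrading twist. A secondary but essential technical point is convergence and nilpotence: the hypothesis that $E$ is finite is precisely what makes $L$ finite-dimensional in each degree and the section space rationally well-behaved, so that the realization functor applies and the isomorphism holds on the nose.
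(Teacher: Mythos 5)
The first thing to observe is that the paper does not prove this statement: it is quoted verbatim, stated without proof, from~\cite{Fel10}, and the surrounding text merely rephrases it in operadic language as $\AQ^*(\Sull(p))\cong\pi_*(\Aut_1(p))$. So there is no internal argument to compare yours against; the honest comparison is with the proof in~\cite{Fel10} itself. That proof is considerably more direct than your outline: given a class in $\pi_n(\Aut_1(p))$, the authors form the adjoint $S^n\times E\longrightarrow E$ over $B$, model it on $\Lambda W\otimes\Lambda V$ using $H^*(S^n)\otimes(\Lambda W\otimes\Lambda V)$, and read off a degree-$n$ cycle in $\Der_{\Lambda W}(\Lambda W\otimes\Lambda V)$; they then verify by hand that this assignment is well defined, bijective on homotopy and homology classes in positive degrees, and carries the Samelson product to the commutator of derivations. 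No section-space models, no realization functor, and no nerve machinery enter their argument.

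Your route --- identify $\Aut_1(p)$ with the identity component of $\operatorname{map}_B(E,E)$ (correct, via Dold's theorem; note also that $E$ finite forces $B$ compact, hence a finite complex, so the Haefliger and Brown--Szczarba machinery does apply) and then exhibit $L=\Der_{\Lambda W}(\Lambda W\otimes\Lambda V)$ as a dg Lie model of that component --- is essentially the Berglund--Saleh-style approach that the paper itself cites as independent related work, and it can be made to work. But as written it has two genuine gaps. First, the statement to be proved \emph{is} the Lie-algebra isomorphism: the additive identification was essentially available before~\cite{Fel10}, and the Samelson-versus-commutator comparison together with the grading bookkeeping is precisely what you defer with ``one must verify''; in~\cite{Fel10} that verification is the bulk of the proof, so your proposal stops where the theorem starts. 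Second, the assertion that $\pi_*(\langle L\rangle)$ recovers $H_*(L)$ with its bracket is not an off-the-shelf fact here: $L$ is not positively graded (it contains derivations of degree $0$, and its degree-zero cohomology is not the homotopy of anything in sight, which is why the isomorphism is really one of positively graded Lie algebras), so one must pass to a suitable positive truncation before realizing; moreover, one must actually construct a comparison between $\langle L\rangle$ (or $\mathrm{MC}_\bullet(L)$) and the rationalized component of the section space. Citing Haefliger or Brown--Szczarba produces a commutative model of the section space, not an identification of the homotopy Lie algebra of its identity component with $H_*(L)$; bridging that gap is substantive work, carried out carefully in the Berglund--Saleh and Berglund--Madsen references, not a citation.
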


In the language of this paper, this result
states the following:

\begin{theorem*}
The tangent cohomology groups of the map
\[
	 \Sull(p):\Sull(B) \longrightarrow \Sull(E)
		\] are
isomorphic, as a graded Lie algebra,
to the rational Samelson Lie algebra $\pi_*(\Aut_1(p))$: there is an
isomorphism of graded Lie algebras
\[
	 \AQ^*(\Sull(p))  \longrightarrow \pi_*(\Aut_1(p)).
	\]
\end{theorem*}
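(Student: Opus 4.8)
The plan is to exhibit the relative Sullivan model of $p$ as an explicit cofibrant replacement of $\Sull(p)$ in the cofibration category of commutative dg algebras, and then to read off the statement directly from~\cite{Fel10}*{Theorem 1}. Throughout I take $\PP=\mathsf{Com}$, so that $\PP$-algebras are commutative dg algebras, operadic modules over a commutative algebra are its ordinary (symmetric) modules, and operadic derivations are the classical ones; in particular the operadic complex $\Der_{\Lambda W}(\Lambda W\otimes\Lambda V)$ appearing below coincides with the derivation complex used in~\cite{Fel10}.

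First I would check that the bottom row of the commuting square preceding the statement is a cofibrant replacement of $f=\Sull(p)$ in the sense of Subsection~\ref{sec:AQ}. By Theorem~\ref{thm:CofibrationStructure}, the inclusion $(\Lambda W,d)\cof(\Lambda W\otimes\Lambda V,D)$ is a cofibration, since it is obtained by freely adjoining the generators $V$: here $\PP(V)=\Lambda V$ and the coproduct $\star$ of commutative algebras is the tensor product, so $\Lambda W\star\PP(V)=\Lambda W\otimes\Lambda V$. Both $\Lambda W$ and $\Lambda W\otimes\Lambda V$ are free as graded commutative algebras, hence cofibrant, and the vertical comparison maps to $\Sull(B)\to\Sull(E)$ are quasi-isomorphisms by definition of a relative model. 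Consequently, by the definition of the tangent cohomology of a map, using the cofibrant model as its own coefficients, I obtain an identification of graded Lie algebras
\[ \AQ^*(\Sull(p)) = H^*\bigl(\Der_{\Lambda W}(\Lambda W\otimes\Lambda V)\bigr), \]
where both sides carry the bracket induced by the commutator of derivations on the dg Lie algebra $\Der_{\Lambda W}(\Lambda W\otimes\Lambda V)$ recorded in Subsection~\ref{sec:LongExactSequenceOfDerivations}. Composing with the isomorphism of~\cite{Fel10}*{Theorem 1} then gives the desired isomorphism of graded Lie algebras $\AQ^*(\Sull(p))\longrightarrow\pi_*(\Aut_1(p))$.

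The step that requires the most care, rather than routine verification, is the identification of coefficients together with the compatibility of the two Lie structures. The defining complex of $\AQ^*(\Sull(p))$ takes coefficients in $A=\Sull(E)$, viewed as a module over the cofibrant model via the quasi-isomorphism $\Lambda W\otimes\Lambda V\to\Sull(E)$; to obtain a Lie algebra one instead uses coefficients in the cofibrant model itself. I would argue that the change-of-coefficients map $\Der_{\Lambda W}(\Lambda W\otimes\Lambda V)\to\Der_{\Lambda W}(\Lambda W\otimes\Lambda V,\Sull(E))$ is a quasi-isomorphism, using that a derivation vanishing on $\Lambda W$ is determined by its values on $V$ and that the source is cofibrant, so the derivation functor is homotopy invariant in the coefficients. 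Independence of the chosen cofibrant replacement follows from the general theory, since any two are linked by weak equivalences under which the derivation functor is invariant. Finally, no comparison of brackets is needed beyond this, because~\cite{Fel10}*{Theorem 1} is already stated as an isomorphism of graded Lie algebras for precisely the commutator bracket, so the two structures agree on the nose.
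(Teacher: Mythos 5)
Your proposal is correct and follows the same route as the paper, which presents this statement as an immediate reformulation of~\cite{Fel10}*{Theorem 1} once the relative Sullivan model is recognized as a cofibrant replacement of $\Sull(p)$ in the sense of Theorem~\ref{thm:CofibrationStructure}. The extra care you take with the change of coefficients from $\Sull(E)$ to the cofibrant model itself is a genuine (if routine) point that the paper leaves implicit, and your argument for it is sound.
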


It is useful to remark that, since each connected component of $\Aut(p)$ has the same rational 
homotopy type, the construction above 
determines the rational homotopy type
of the space $\Aut(p)$ in terms of the
rational homotopy type of $\Aut_1(p)$
and the group structure in $\pi_0(\Aut(p))$. See~\cite{Fel10} 
for complete details.

We now observe that 
we can exhibit the cofibration
$(\Lambda W,d) 
	 \longrightarrow 
	 (\Lambda W\otimes \Lambda V,D)$
	 as the tower of
cofibrations obtained by adding the
cells of $\Lambda V$ ``one by one''.
With this in mind, let us put
the technical tools developed in
Section~\ref{sec:LongExactSequenceOfDerivations}
into the appropriate context to 
apply them to Sullivan algebras.

The construction of the relative Sullivan model~\cite[Proposition 15.6]{RHT}, 
does not require any finite type hypotheses on 
the spaces involved in the fibration, 
see \cite[Theorem 3.1]{RHT2}.
On the other hand, for any simply connected 
space $X$ with Sullivan model $(\Lambda V,d)$,
we do need $\pi_*(X)$
finite dimensional in each degree in order to
have an identification $V^* = \pi_*(X)$.
Otherwise, we can only conclude that 
there is an isomorphism
$V^* \longrightarrow \operatorname{hom} (\pi_*(X), \Q)$.  
Since in most applications we will 
take $X$ to be a finite type CW-complex,
a classical result of Serre, see
for instance \cite[Theorem 20.6.3]{Tammo}, let us identify
$V^*=\pi_*(X)$,
and we will write it like this in the statements.

\begin{theorem} \label{thm:SullivanFibrations}
	Let $F\hookrightarrow E \xrightarrow{p} B$ be 
	a fibration of $1$-connected CW-complexes, 
	with $E$ finite. There is a convergent
	spectral sequence with 
	\begin{equation*}
	E_2^{s,t} =  \hom(\pi_s(F),H^{t}(E)) \; \xRightarrow{\phantom{m}s\phantom{m}}
	 \; \pi_{s-t}(\Aut_1(p)).
	\end{equation*}
\end{theorem}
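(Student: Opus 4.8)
The plan is to realise the cofibration $\Sull(p)$ as the colimit of a cellular tower, feed it into Corollary~\ref{cor:AQss}, and then identify the two ends of the resulting spectral sequence. First I would take a relative minimal Sullivan model $(\Lambda W,d)\cof(\Lambda W\otimes\Lambda V,D)$ of the fibration. Since $B$ and $E$ are $1$-connected, this is a cofibration between cofibrant commutative dg algebras that replaces the map $\Sull(p)\colon\Sull(B)\longrightarrow\Sull(E)$, so it is precisely the input Corollary~\ref{cor:AQss} asks for. Filtering $V$ by degree and adjoining the generators one degree at a time exhibits this map as the colimit of a cellular tower of cofibrations, in which the relative generators $V$ are dual to the rational homotopy of the fibre; the finiteness of $E$ and the standard finite-type result of Serre quoted above let us write $V^*=\pi_*(F)$.

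With the tower in hand, I would run the spectral sequence of Theorem~\ref{thm:SucesionEspectral} with coefficients in $U=\Lambda W\otimes\Lambda V$ itself, that is, applied to the identity map, so that it computes $\AQ^*(\Sull(p))=H^*(\Der_{\Lambda W}(\Lambda W\otimes\Lambda V))$. The target is then identified with $\pi_*(\Aut_1(p))$ by the operadic reformulation of Theorem~1 of~\cite{Fel10} recorded above; under the reindexing $(s,t)\mapsto(s,-t)$ of this section the total degree of $E_\infty^{s,t}$ is $s-t$, so the abutment reads $\pi_{s-t}(\Aut_1(p))$, as claimed.

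The heart of the argument is the $E_2$-page, which I would obtain by following the proof of Corollary~\ref{cor:paginaE2} in the relative setting. A derivation vanishing on $\Lambda W$ is determined by its restriction to $V$, so $E_1^{s,t}=\hom(V_s,H^t(U))$, and the first differential is $\hom(d_{(1)},1)$ with $d_{(1)}$ the linear part of $D$ on $V$. Minimality of the relative model makes $d_{(1)}$ vanish on $V$, so the relative Quillen homology is $V$ itself; combining this with the identifications $V^*=\pi_*(F)$ and $H^t(U)=H^t(E)$ produces the $E_2$-page $\hom(\pi_s(F),H^t(E))$ of the statement. For convergence I would invoke the cohomological form of Lemma~\ref{lemma:VanishingLemmas}: since Sullivan algebras are concentrated in non-negative degrees, $E_1^{s,t}=0$ whenever $t\geq 2s+1$, confining the spectral sequence to a region where Boardman's conditional convergence applies, while the finiteness of $E$ keeps each page degreewise finite-dimensional.

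The main obstacle I anticipate is bookkeeping rather than conceptual. One must carefully reconcile the homological grading and bidegree of the abstract machinery of Section~\ref{sec:2} with the cohomological, reindexed conventions of this section, and in particular track the degree shift between the functor of indecomposables and Quillen homology (the $*\mapsto *+1$ shift visible in the table) so that the fibre homotopy groups land in internal degree exactly $s$. Equally delicate is keeping the duality straight: the relative generators are naturally dual to $\pi_*(F)$, and the clean identification $V^*=\pi_*(F)$ rests on the finite-type hypothesis on $E$. Once these conventions are fixed, every remaining step is a direct specialisation of the results of Section~\ref{sec:2}.
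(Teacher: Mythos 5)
Your route is the one the paper takes: choose a relative Sullivan model, filter it by the degree of the generators of $V$ to get a cellular tower, feed the tower into the machinery of Section~\ref{sec:2} with coefficients in the algebra itself, and identify the abutment via Theorem~1 of~\cite{Fel10}. The identifications of the target and of the $E_1$-term $\hom(V^s,H^t(E))$ with $V^s\cong\pi_s(F)^\vee$ are fine, as is the convergence discussion.

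There is, however, a genuine gap at the step where you assert that $d_1=\hom(d_{(1)},1)$ and conclude from minimality that $d_1=0$, so that $E_2=E_1$. The computation of $d_1$ in the proof of Corollary~\ref{cor:paginaE2} relies on a degree count that is special to the homological, $0$-connected situation: there, $dw$ has degree \emph{one less} than $w$, so a decomposable monomial of $dw$ containing a factor in $V_s$ would force the remaining factors to have total degree $0$, which is excluded. For a cochain algebra the differential \emph{raises} degree, so for $w\in V^{s+1}$ the element $Dw$ has degree $s+2$ and may contain monomials $a\cdot v$ with $v\in V^{s}$ and $a$ a generator of degree $2$ (in $W^2$ or $V^2$). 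Extending $f$ by zero and evaluating, such a monomial contributes $\pm\,[a]\cdot[f(v)]$ to $d_1[f]$, a term governed by the product of $H^*(E)$ rather than by the linear part of $D$. Minimality kills the linear part but not these quadratic terms. Concretely, for $X=S^2$ with minimal model $(\Lambda(x_2,y_3),\,dy=x^2)$ one finds $d_1(x\mapsto 1)=-2\,(y\mapsto x)\neq 0$, and this nonvanishing is in fact needed for the spectral sequence to converge to $\pi_*(\Aut_1(S^2))\otimes\Q\cong\Q$ concentrated in degree $3$; with $d_1=0$ the $E_\infty$-page would be too big. So $\hom(\pi_s(F),H^t(E))$ is really the $E_1$-page, and the $E_2$-page is its homology with respect to a differential that combines $d_{(1)}$ with the quadratic part of $D$ and the cup product on $H^*(E)$. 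I should add that the paper's own proof has exactly the same soft spot ---it invokes Corollary~\ref{cor:paginaE2} outside the homologically graded, $0$-connected hypotheses under which its differential computation is valid--- so this is as much a criticism of the source as of your write-up; but since you state the false intermediate claim explicitly, the deduction ``minimality $\Rightarrow E_2=E_1$'' must be repaired, either by adding hypotheses (e.g.\ $W^2=V^2=0$) or by describing the $E_2$-page as the homology of the convolution-type differential above.
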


\begin{proof} 
Let $(\Lambda W,d) \cof (\Lambda W\otimes \Lambda V,D)$ be a relative Sullivan model of the fibration. The 
	Sullivan condition provides a filtration $V(k)$ 
	of $V$ such that  
	\begin{equation}\label{ecu:SullivanCon}
	DV(k)\subseteq  \Lambda W\otimes \Lambda V(k-1) \quad \textrm{ for all } k\geq 0.
	\end{equation} Since $W=W^{\geq 2}$, we can 
	assume that its Sullivan filtration is given by 
	degree, $W(k)=W^{\leq k}$. Equally, we can assume the same degree filtration on $V$. 
	It thus makes sense to consider the cellular 
	tower of cofibrations of cdga's given by 
	$A_s=\Lambda W\otimes \Lambda V^{<s}$, whose 
	colimit is the relative Sullivan algebra 
	\[(\Lambda W,d) 
		\cof (\Lambda V\otimes \Lambda W,D). \] 
	
We are in the cohomological
situation of Corollary~\ref{cor:paginaE2},
and this gives us the second page if
we note that the cohomology groups of
$\Lambda W\otimes \Lambda V$ are those of
$E$. On the other hand, the quotient algebra
$(\Lambda V,d')$ is a model for the fibre, 
so its Quillen homology  gives the homotopy
groups of~$F$. To conclude, recall 
that the target of the spectral sequence are the cohomology 
groups of $\Der_{\Lambda W}(\Lambda W\otimes \Lambda V)$, 
so an application of~\cite[Theorem 1]{Fel10} finishes the proof.
\end{proof}

Applying this result to the 
trivial fibration $X\longrightarrow *$ yields the following result.

\begin{corollary}\label{cor:fibraciontrivial} 
	Let $X$ be a finite, $1$-connected CW-complex. There 
	is a convergent first quadrant spectral 
	sequence with $
	E_2^{s,t} =  \hom(\pi_s(X) ,H^{t}(X)) \; \xRightarrow{\phantom{m}s\phantom{m}}  \; \pi_{s-t}(\Aut_1(X))$.
\end{corollary}

To obtain the corollary above, we just 
fed a specific fibration to the spectral
sequence of Theorem~\ref{thm:SullivanFibrations}.
The following table collects some other 
fibrations and the target
of the corresponding spectral sequence.

\medskip
\begin{table}[h]
\centering
\begin{tabular}{@{} l*3c @{}}
\toprule 
Input & Shape   &Target     \\ 
\midrule
loop-space fibration & $PX\longrightarrow X$  & $\pi_*(\Omega X)$  \\
trivial fibration &  $F\times X\longrightarrow X$  & $\pi_*(\FF(X,\Aut(F))$  \\
principal $G$-bundle & $E\longrightarrow B$   & $\pi_*(G_\circ)$  \\
 \bottomrule
 \end{tabular}
\end{table}

We remark that the spectral sequence 
of Theorem~\ref{thm:SullivanFibrations}
is multiplicative for a graded Lie bracket 
that, on the second page, identifies 
with the convolution bracket obtained 
from the Lie coalgebra~$\pi_*(F)$ with
the Whitehead cobracket and the commutative
algebra $H^*(E)$ with the cup product. 
A result analogous to Theorem 3 
in~\cite{BuijsMurillo} for 
$\mathcal{F}(X,Y)$ replaced with $\Aut_1(p)$
should then give the
 conclusion analogous to that of 
Theorem~\ref{thm:loopproduct} that
this product converges to the Whitehead
product in the target homotopy groups
$\pi_*(\Aut_1(p))$.

\pagestyle{references}
\bibliographystyle{plain} 
\bibliography{spectral-sequence-v2-may}

\vfill

\begin{small}
	\myauthor{José M. Moreno-Fernández}{josemoreno@uma.es}
	{
		\textsc{Dep. de Álgebra, } \\
		\textsc{Geometría y Topología},	\\ 
		Facultad de Ciencias, \\
   Universidad de Málaga, \\
   29080 Málaga,  Spain
   							}
\end{small} 
\medskip

\begin{small}
	\myauthor{Pedro Tamaroff}{tamarofp@hu-berlin.de}
	{
		\textsc{Institut für Mathematik},	\\ 
		Johann von Neumann-Haus, 			\\
		Humboldt-Universität zu Berlin, 	\\
		Rudower Chaussee 25, 				\\
		12489 Berlin, Deutschland
									}
\end{small}  
\vspace*{\fill} 

\end{document}